\journal{Journal of Mathematical Analysis and Applications}
\crefname{section}{\S}{\S\S}
\Crefname{section}{\S}{\S\S}
\theoremstyle{plain}% default
\newtheorem{thm}{Theorem}
\newtheorem{lem}{Lemma}
\newtheorem{cor}{Corollary}
\theoremstyle{definition}
\newtheorem{defn}{Definition}
\theoremstyle{remark}
\newtheorem{rem}{Remark}
\numberwithin{equation}{section}
\numberwithin{prop}{section}
\numberwithin{thm}{section}
\numberwithin{lem}{section}
\numberwithin{cor}{section}
\numberwithin{rem}{section}
\numberwithin{defn}{section}
\begin{document}

\begin{frontmatter}

\title{On a nonlocal extension of differentiation}
%\tnotetext[mytitlenote]{Fully documented templates are available in the elsarticle package on \href{http://www.ctan.org/tex-archive/macros/latex/contrib/elsarticle}{CTAN}.}

%% Group authors per affiliation:
\author[mymainaddress]{Ravi Shankar}
\address{Department of Mathematics, University of Nebraska-Lincoln, 1400 R Street, Lincoln, NE 68588, United States}
\ead{rshankar@mail.csuchico.edu}
\address[mymainaddress]{Department of Mathematics and Statistics, CSU Chico, 400 West First Street Chico, California 95929, United States}

\begin{abstract}
We study an integral equation that extends the problem of anti-differentiation.  We formulate this equation by replacing the classical derivative with a known nonlocal operator similar to those applied in fracture mechanics and nonlocal diffusion.  We show that this operator converges weakly to the classical derivative as a nonlocality parameter vanishes.  Using Fourier transforms, we find the general solution to the integral equation.  We show that the nonlocal antiderivative involves an infinite dimensional set of functions in addition to an arbitrary constant.  However, these functions converge weakly to zero as the nonlocality parameter vanishes.  For special types of integral kernels, we show that the nonlocal antiderivative weakly converges to its classical counterpart as the nonlocality parameter vanishes.
\end{abstract}

\begin{keyword}
nonlocal operators\sep integral equations\sep Fourier transform\sep tempered distributions\sep peridynamics\sep nonlocal diffusion
\end{keyword}

\end{frontmatter}

%\linenumbers

\section{Introduction}
We consider integral equations for distributions $u_\epsilon$ on $\mathbb{R}$ of the form:
\begin{align}\label{form}
D_{\alpha,\epsilon}u_\epsilon(t):=-\alpha_\epsilon\ast u_\epsilon(t)=F(t),\hspace{4 mm}t\in\mathbb{R},
\end{align}
where $\ast$ denotes the convolution, $f\ast g\,(t)=\int_{\mathbb{R}}f(t-s)g(s)\,\mathrm ds$, $\alpha_\epsilon$ is an anti-symmetric function on $\mathbb{R}$ that depends on a positive ``nonlocality parameter" $\epsilon$, and $D_{\alpha,\epsilon}$ is a ``nonlocal derivative".  

Nonlocality describes interactions over distances; in \eqref{form}, it refers to the fact that $F(t)$ is related to $u(s)$ via $D$ for values of $s$ far from $t$, where ``far" is quantified by the parameter $\epsilon$.  Mathematically speaking, $D_{\alpha_\epsilon}$ is ``strongly nonlocal" in the sense of Rogula \cite{Rogula}, since the support of $D_{\alpha,\epsilon}u$ is not contained in that of $u$.  In contrast, the derivative $u'(t)$ does satisfy this property, so it is a ``local" operator (it is also ``weakly nonlocal"; see \cite{Rogula}).

We define $D_{\alpha,\epsilon}$ in such a way that $D_{\alpha,\epsilon}f(t)\to f'(t)$ in some sense as $\epsilon\to 0$.  In other words, we think of (\ref{form}) as a ``nonlocal extension" of the first order ordinary differential equation (ODE):
\begin{align}\label{ode}
u'(t)=F(t),\hspace{4 mm}t\in\mathbb{R}.
\end{align}
We solve \eqref{form} using the Fourier transform:
\begin{align}\label{fourier}
\begin{split}
&\hat{u}(\xi)=\mathcal{F}[u](\xi)=\int_{-\infty}^\infty e^{-2\pi i\xi t}u(t)\mathrm dt,\\
&u(t)=\mathcal{F}^{-1}[\hat{u}](t)=\int_{-\infty}^\infty e^{2\pi it\xi}\hat{u}(\xi)\mathrm d\xi.
\end{split}
\end{align}

Nonlocal models, though well known since the 1800's viz. fractional derivatives, have recently found successful application in Silling's \cite{Silling2000} theory of peridynamic fractures; for applications of these nonlocal operators in nonlocal diffusion and image processing, see \cite{Florin,Gilboa2009}.

The success of nonlocal models stems from the ease with which they handle singularities.  Although classical models using differential equations cannot well describe discontinuous solutions to these equations, such as those occurring in fracture dynamics, the integral operators applied by Silling suffer no such difficulties.  Using nonlocal operators in these classical models extends the types of physical processes that we can model and the types of qualitative behavior that we can describe.

An important feature of these operators is that they are extensions of classical differential operators.  In \cite{DGLZ2013}, it is shown that such first order nonlocal operators converge strongly in $L^2$ to classical partial derivatives as a nonlocality parameter vanishes.  This extensivity property is important since it allows us to preserve much of the physical structure that makes up classical differential models.  In addition, when the classical descriptions are correct, the nonlocal frameworks can recover these results by passing to the ``classical limits".

The majority of the nonlocal literature has focused on so-called second order models.  These nonlocal models extend second order partial differential equations and boundary value problems, such as the wave equation \cite{weckner}, Laplace's equation \cite{DuMengesha} and the heat equation \cite{Rossi} (see also the nonlocal counterpart to the fourth order biharmonic equation \cite{radu15}).  However, the literature for nonlocal extensions of first order models is more sparse.  Du et al \cite{du2012} studied a nonlocal extension of the nonlinear advection equation.  Among other results, they showed that inviscid solutions to this nonlinear equation do not blow up in finite time, a stark contrast to those of the classical equation.  There is also some literature available for first order models using fractional derivatives (see \cite{heymans} for a large set of examples).

Given the success of these nonlocal operators in extending second order models, we are interested in their application to first order models, specifically first order ordinary differential equations (ODEs).  First order models are ubiquitous in the natural sciences.  One area that is currently under active research is the treatment of discontinuous models.  These are ODEs that have discontinuous ``forcing terms", and have applications in the flow through porous soil \cite{flynn}, static friction problems \cite{stewart}, and optimal control with discrete feedbacks \cite{marigo}.  A challenge with classical (i.e. differential) frameworks is to solve these problems numerically \cite{dieci}, since the classical derivatives for solutions to these problems do not exist everywhere.  As a result, many heuristic and complicated numerical methods are needed to solve these equations computationally.  Although we do not pursue this here, one motivation for studying nonlocal first order models is the possibility of replacing classical derivatives in these equations with nonlocal derivatives.  It is possible that solving discontinuous integral equations, which do not contain classical derivatives, is a more straightforward task than solving discontinuous ODEs.  

The contributions of this paper are as follows.

\begin{itemize}

\item {\bf Weak operator convergence:}  We show that the nonlocal derivative converges weakly to the classical derivative.

\item {\bf General solution for antiderivative:}  Using Fourier transforms, we find a general formal expression for the nonlocal antiderivative.  In general, it is infinite-dimensional, but we identify a class of kernel for which it is one-dimensional, analogous to the classical solution.

\item {\bf Weak convergence of solutions:}  Given some hypotheses, we show that the nonlocal antiderivative is a tempered distribution and that it converges weakly to the classical antiderivative.  We also show strong $L^p$ convergence under additional hypotheses.

\end{itemize}

In \cref{sec:der}, we give an overview of the nonlocal derivative operator and prove that it converges weakly to the classical derivative.  Section \ref{sec:int} focuses on nonlocal integration.  We find the general form of the nonlocal antiderivative in \cref{subsec:gen}, provide explicit examples in \cref{subsec:ex}, and discuss the convergence of the antiderivative to the classical solution in \cref{subsec:conv}.  After showing in \cref{subsubsec:hom} that the homogeneous solution converges weakly to the constant function, we prove the inhomogeneous term's convergence in \cref{subsubsec:inh}.  %We construct nonlocal Appell polynomials in \cref{subsec:appell}.

\section{Nonlocal derivative}
\label{sec:der}
The nonlocal derivative operator used here was presented before in \cite{DGLZ2013}.

\begin{defn}[Nonlocal derivative]\label{def:der}
A nonlocal derivative $D_{\alpha,\epsilon}$ is a convolution operator acting on $u:\mathbb{R}\to\mathbb{R}$ with kernel $\alpha_\epsilon$ depending on a nonlocality parameter $\epsilon>0$:
\begin{align}\label{grad}
D_{\alpha,\epsilon}u(t):=-\alpha_\epsilon\ast u(t)=\int_{\mathbb{R}}\alpha_\epsilon(s-t)u(s)\mathrm ds,\hspace{4 mm}t\in\mathbb{R}.
\end{align}
The kernel should satisfy the following conditions:

(i) Scaling and normalization:
\begin{align}\label{kprop:scale}
&\alpha_\epsilon(s)=\sigma_\epsilon\alpha(y/\epsilon),\hspace{4 mm}\sigma_\epsilon=\left[\epsilon^2\int_{\mathbb{R}}s\alpha(s)\mathrm ds\right]^{-1},
\end{align}

(ii) Finite dipole moment:
\begin{align}\label{kprop:int}
&0<\left|\int_{\mathbb{R}}s\alpha(s)\mathrm ds\right|=:|\alpha_{(1)}|<\infty,
\end{align}

(iii) Anti-symmetry:
\begin{align}\label{kprop:anti}
&\alpha(s)=s\,\varphi(|s|).
\end{align}
\end{defn}
\begin{rem}
We will always impose another condition that regulates the behavior of $\alpha$ at infinity.  How restrictive this condition needs to be depends on the application.  Examples: $\text{supp}\,\alpha\subset[-1,1]$, or $\int_{\mathbb{R}}|s^j\alpha(s)|\mathrm ds<\infty$ for each $j=1,2,...$.
\end{rem}
\begin{rem}
Du et al \cite{DGLZ2013} used a weaker form of normalization than the explicit scaling in \eqref{kprop:scale}, but the convenience of this definition may be worth the cost in generality.
\end{rem}

These conditions characterize $D_{\alpha,\epsilon}$ as a nonlocal extension of $d/dt$.  They ensure that $D_{\alpha,\epsilon}u\to u'$ in some sense as $\epsilon\to 0$.  Of course, in which sense this convergence occurs and the space that $D_{\alpha,\epsilon}u$ lives in depend greatly on the space that $u$ lives in.  For example, Du et al \cite{DGLZ2013} proved that, if $\alpha$ has compact support, and $u$ is in $W^1_2$, then $D_{\alpha,\epsilon}u$ is in $L^2$ and that $\|D_{\alpha,\epsilon}u-u'\|_{L^2}\to 0$ as $\epsilon\to 0$.  But even if $u$ is a distribution (i.e. a bounded linear functional on the space $C^\infty_c$), we can still obtain weak convergence.  

\begin{thm}[Weak convergence]\label{thm:weak}
Let $u$ be a distribution and $\alpha$ have compact support.  Then $D_{\alpha,\epsilon}u\rightharpoonup u'$ in the $C^\infty_c$ topology.
\end{thm}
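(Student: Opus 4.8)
The plan is to work directly with the distributional pairing. For a test function $\phi\in C^\infty_c(\mathbb{R})$, I would compute $\langle D_{\alpha,\epsilon}u,\phi\rangle$ by moving the convolution onto $\phi$. Since $D_{\alpha,\epsilon}u=-\alpha_\epsilon\ast u$ and $\alpha_\epsilon$ is anti-symmetric (so $\check\alpha_\epsilon=-\alpha_\epsilon$), the adjoint identity for convolution gives $\langle -\alpha_\epsilon\ast u,\phi\rangle=\langle u,-\check\alpha_\epsilon\ast\phi\rangle=\langle u,\alpha_\epsilon\ast\phi\rangle$. Likewise $\langle u',\phi\rangle=-\langle u,\phi'\rangle$. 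So the whole theorem reduces to showing that, for every fixed $\phi\in C^\infty_c$,
\begin{align*}
\alpha_\epsilon\ast\phi \longrightarrow -\phi' \quad\text{in the } C^\infty_c \text{ topology as } \epsilon\to 0,
\end{align*}
i.e. with convergence of the functions and all their derivatives, uniformly, and with supports staying in a fixed compact set. The last point is immediate: since $\alpha$ has compact support, say in $[-R,R]$, the scaling \eqref{kprop:scale} gives $\operatorname{supp}\alpha_\epsilon\subset[-\epsilon R,\epsilon R]$, so for $\epsilon\le 1$ all the functions $\alpha_\epsilon\ast\phi$ are supported in the fixed compact $\epsilon_0$-neighborhood of $\operatorname{supp}\phi$. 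Moreover $\partial^k(\alpha_\epsilon\ast\phi)=\alpha_\epsilon\ast\partial^k\phi$, so it suffices to prove the $k=0$ statement $\alpha_\epsilon\ast\psi\to-\psi'$ uniformly for an arbitrary $\psi\in C^\infty_c$ and then apply it with $\psi=\partial^k\phi$.

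For the core uniform-convergence step, I would Taylor-expand. Write
\begin{align*}
\alpha_\epsilon\ast\psi(t)=\int_{\mathbb{R}}\alpha_\epsilon(s)\,\psi(t-s)\,\mathrm ds .
\end{align*}
Using $\psi(t-s)=\psi(t)-s\,\psi'(t)+s^2 r(t,s)$ with $r$ bounded in terms of $\|\psi''\|_\infty$ (via the integral form of the remainder), and using that $\int\alpha_\epsilon=0$ by anti-symmetry \eqref{kprop:anti}, the $\psi(t)$-term drops out. The normalization \eqref{kprop:scale} is precisely engineered so that $\int_{\mathbb{R}} s\,\alpha_\epsilon(s)\,\mathrm ds=1$: indeed $\int s\,\alpha_\epsilon(s)\,ds=\sigma_\epsilon\int s\,\alpha(s/\epsilon)\,ds=\sigma_\epsilon\epsilon^2\int s\,\alpha(s)\,ds=1$. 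Hence the $s\,\psi'(t)$-term contributes exactly $-\psi'(t)$. The remainder term is bounded by $\|\psi''\|_\infty\int|s^2\alpha_\epsilon(s)|\,ds$, and a change of variables shows $\int|s^2\alpha_\epsilon(s)|\,ds=\sigma_\epsilon\epsilon^3\int|s^2\alpha(s)|\,ds=\epsilon\,|\alpha_{(1)}|^{-1}\int|s^2\alpha(s)|\,ds=O(\epsilon)$, where the integral is finite because $\alpha$ is compactly supported. This gives $\sup_t|\alpha_\epsilon\ast\psi(t)+\psi'(t)|=O(\epsilon)\to0$, as needed.

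I do not expect a serious obstacle here; the argument is essentially a mollifier-type estimate and the hypotheses have been chosen to make the moments work out cleanly. The only points requiring a little care are bookkeeping ones: (a) justifying the adjoint/transpose identity $\langle\alpha_\epsilon\ast u,\phi\rangle=\langle u,\check\alpha_\epsilon\ast\phi\rangle$ at the level of distributions (standard, since $\alpha_\epsilon\in C_c$ or at least $\alpha_\epsilon\ast\phi\in C^\infty_c$ so the pairing makes sense), and (b) making sure the "$C^\infty_c$ topology" claim is read correctly — convergence $v_\epsilon\to v$ in $C^\infty_c$ means the supports lie in one fixed compact set and $\partial^k v_\epsilon\to\partial^k v$ uniformly for every $k$, both of which are delivered above. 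Once $\alpha_\epsilon\ast\phi\to-\phi'$ in $C^\infty_c$, continuity of the fixed distribution $u$ gives $\langle D_{\alpha,\epsilon}u,\phi\rangle=\langle u,\alpha_\epsilon\ast\phi\rangle\to\langle u,-\phi'\rangle=\langle u',\phi\rangle$ for every $\phi$, which is exactly $D_{\alpha,\epsilon}u\rightharpoonup u'$.
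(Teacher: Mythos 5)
Your proposal is correct and takes essentially the same route as the paper: both move the convolution onto the test function via the adjoint/transpose identity (using anti-symmetry to get $D_{\alpha,\epsilon}^*=-D_{\alpha,\epsilon}$), Taylor-expand the test function, use $\int\alpha_\epsilon=0$ and $\int s\,\alpha_\epsilon=1$ to isolate the derivative, bound the remainder by $O(\epsilon)\,\|\psi''\|_\infty$, and note that compact support of $\alpha$ keeps the supports of $\alpha_\epsilon\ast\phi$ in a fixed compact set so that the convergence is genuinely in the $C^\infty_c$ topology. If anything you are slightly more explicit than the paper about why the $\psi(t)$ term drops and why $\int s\,\alpha_\epsilon=1$, points the paper leaves implicit.
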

\begin{proof}
We first show strong convergence for any $\psi$ in $C^\infty_c$.  By Taylor's Theorem and \eqref{kprop:scale}:
\begin{align}\label{conv}
\begin{split}
D_{\alpha,\epsilon}\psi(t)&=\int_{\mathbb{R}}\alpha_{\epsilon}(s)\psi(s+t)\mathrm ds=\int_{\mathbb{R}}\alpha_{\epsilon}(s)\left[\psi(t)+s\psi'(t)+\frac{1}{2}s^2\psi''(\theta(s,t))\right]\mathrm ds\\
&\le \psi'(t)+\frac{1}{2}\|\psi''\|_{\infty}\int_{\mathbb{R}}s^2|\alpha_\epsilon(s)|\mathrm ds\\
&=\psi'(t)+\frac{|\alpha|_{(2)}}{2|\alpha_{(1)}|}\,\epsilon\,\|\psi''\|_{\infty},
\end{split}
\end{align}
where $\|.\|_\infty$ denotes the supremum norm, and $\theta(s,t)\in(t,s+t)$ comes from Taylor's theorem.  It is clear from this estimate that $\|D_{\alpha,\epsilon}\psi-\psi'\|_\infty\to 0$ as $\epsilon\to 0$.

Since $\frac{d}{dt}D_{\alpha,\epsilon}\psi=D_{\alpha,\epsilon}\psi'$, and $\psi'$ is in $C^\infty_c$, we see that the above result holds for any derivative of $\psi$.  More precisely, for any $n=0,1,2,...$, we have $\|D_{\alpha,\epsilon}\psi^{(n)}-\psi^{(n+1)}\|_\infty\le \epsilon|\alpha|_{(2)}\|\psi^{(n+2)}\|_\infty/2|\alpha_{(1)}|\to 0$ as $\epsilon\to 0$.

Since $\alpha$ has compact support, $D_{\alpha,\epsilon}\psi$ is in $C^\infty_c$ for each $\epsilon$, with $\text{supp}\,D_{\alpha,\epsilon}\psi\subset \text{supp}\,D_{\alpha,1}$ if $\epsilon\le 1$.  Thus, $D_{\alpha,\epsilon}\psi\to \psi'$ in the (strong) sense of $C^\infty_c$ (for more details on this topology, see e.g. \cite{rudin,strichartz}).

Let $(,)$ be the standard inner product.  Since $u$ is a continuous functional on $C^\infty_c$, we can use the strong convergence to conclude that $(u,D_{\alpha,\epsilon}\psi)\to(u,\psi')$ for any $\psi$ in $C^\infty_c$.  But we define the distributions $D_{\alpha,\epsilon}u$ and $u'$ by the inner products $(D_{\alpha,\epsilon}u,\psi):=(u,D_{\alpha,\epsilon}^*\psi)$ and $(u',\psi):=-(u,\psi')$, where $D_{\alpha,\epsilon}^*$ is the adjoint of $D_{\alpha,\epsilon}$.  In fact, as in the classical case, we have $D_{\alpha,\epsilon}^*=-D_{\alpha,\epsilon}$ \cite{DGLZ2013}.  This means that $(D_{\alpha,\epsilon}u,\psi)\to (u',\psi)$ for every $\psi$, or that $D_{\alpha,\epsilon}u\rightharpoonup u'$.
\end{proof}

\begin{rem}
We assumed that $\alpha_\epsilon$ had compact support in order for $(u,\alpha_\epsilon\ast\psi)$ to make sense, given that $u$ is a general distribution.  Different assumptions about the space that $u$ lives in require different restrictions on the kernel $\alpha$.  But in each case, the proof of weak convergence proceeds almost identically.
\end{rem}

By imposing stronger hypotheses on $u$, it is likely that this weak convergence result can be replaced by something stronger, as Du et al. \cite{DGLZ2013} showed for $W^1_2$ functions $u$.  Figure \ref{fig:gradCon} demonstrates convergence for $u(t)=|t|^{1/2}$ and $\alpha(s)=s\mathds{1}_{(-1,1)}(s)$, where $\mathds{1}_{(-1,1)}$ is the indicator function on $(-1,1)$.  The weak convergence result in Theorem \ref{thm:weak} applies, but the convergence appears to be quite strong.  Indeed, away from $t=0$ (say, on $\mathbb{R}\setminus[-1/2,1/2]$), the convergence of the nonlocal derivatives to the classical derivative appears to be uniform even in the higher derivatives.  The reason for this is that $u$ is a continuous function that is $C^\infty$ almost everywhere, so it is significantly more regular than a distribution.
\begin{figure}[H]
    \includegraphics[width=1\textwidth]{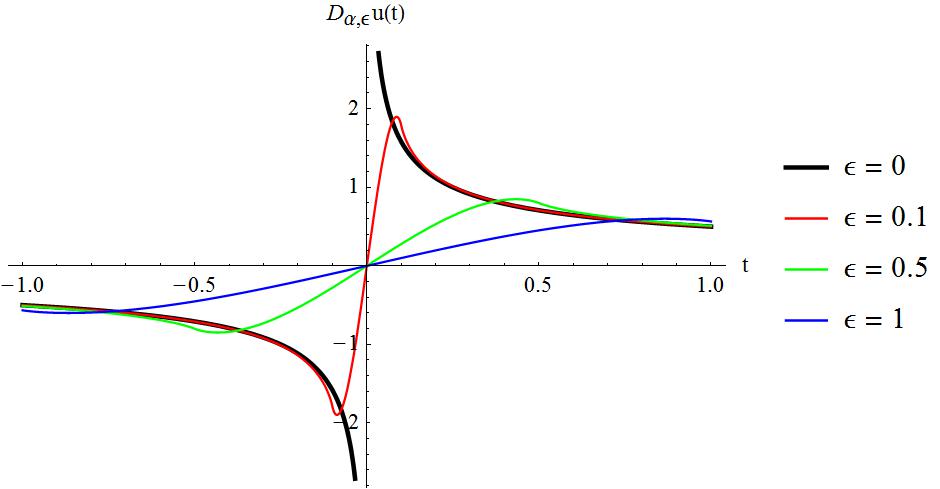}
    \caption{$D_{\alpha,\epsilon}u(t)$ for $u(t)=|t|^{1/2}$ and $\alpha(s)=s\mathds{1}_{(-1,1)}$(s).}
    \label{fig:gradCon}
\end{figure}

We note that the anti-symmetry condition (\ref{kprop:anti}) is critical for interpreting $D_{\alpha,\epsilon}$ as an extension of the classical derivative $d/dt$, or as a ``first order" operator, as evident from its role in (\ref{conv}) of canceling the $u(t)$ term.  Symmetric kernels $\alpha_{\epsilon}$ would instead give something similar to a ``second order" operator like $d^2/dt^2$; indeed, convergence to such for a similar integral operator was proven in \cite{zhou}.

\section{Nonlocal integration}
\label{sec:int}
In the classical setting, the problem $u'(t)=F(t)$ yields, in general, the solution $u(t)=A+\int F(t)\mathrm dt$ (i.e. the ``antiderivative").  We investigate this problem using the nonlocal derivative $D_{\alpha,\epsilon}$ in place of $d/dt$.  More specifically, we consider a given function $F=F(t)$ and an unknown distribution $u_\epsilon$ that satisfy:
\begin{align}\label{prob:int}
D_{\alpha,\epsilon}u_\epsilon(t)=-\alpha_\epsilon\ast u_\epsilon(t)=F(t),\hspace{4 mm}t\in\mathbb{R}.
\end{align}

A similar type of convolution equation has been considered in harmonic analysis for $F\equiv 0$ \cite{delsarte,schwartz}, in which case $u$ is said to be a ``mean-periodic" function.  See also \cite{kahane} for the case of nonzero $F$ when $u(t<0)=0$.  In these studies, the kernel is usually taken to be a measure, so it satisfies a non-negativity requirement.  We work with an anti-symmetric kernel, so our work is more analogous to the study of Hilbert transforms.  The main distinction of our study, however, is the connection of these integral equations with the classical differential equation \eqref{ode} in the limit of $\epsilon\to 0$.

Our results can also be connected to the distribution of zeros of $\hat\alpha_\epsilon$, the Fourier transform of $\alpha_\epsilon$.  For entire analytic functions in the complex plane, there is an extensive theory developed in this area; we cite the seminal work of P\'olya \cite{polya} and the review article by Dimitrov and Rusev \cite{dimitrov}.  We also mention the works by Sedletskii, summarized in \cite{sedletskiibook}, that expand on P\'olya's work and also study the completeness of the exponential basis functions arising from these zeros.

\subsection{General solution}
\label{subsec:gen}
In general, we can find a formal solution to (\ref{prob:int}) by taking its Fourier transform \eqref{fourier}:
\begin{align}\label{prob:intF}
-\hat{\alpha}_\epsilon \hat{u}_\epsilon(\xi)=\hat{F}(\xi),\hspace{4 mm}\xi\in\mathbb{R}.
\end{align}

We can formally solve this equation by division if we make some assumptions about how or if $\hat{\alpha}_\epsilon$ vanishes.  
\begin{defn}\label{def:analytic}
We consider those kernels $\alpha$ that satisfy conditions (ii)-(iii) in Definition \ref{def:der} as well as the following condition on its behavior at infinity:
\begin{align}\label{kprop:analytic}
\int_{\mathbb{R}}|s^j\alpha(s)|\mathrm ds\le A_\alpha\,j!
\end{align}
for some $A_\alpha>0$ and each $j=1,2,...$.
\end{defn}
\begin{rem}
Examples of $\alpha$ that satisfy this strengthened condition include rapidly decreasing functions like $\alpha(s)=\text{sgn}(s)\exp(-|s|^j)$ or those with compact support, such as $\alpha(s)=s\beta(|s|)\mathds{1}_{(-1,1)}(s)$.
\end{rem}

If $\alpha$ satisfies conditions \eqref{kprop:int}, \eqref{kprop:anti}, and \eqref{kprop:analytic}, then $\hat\alpha$ is entirely analytic on the real line.  This means that $\hat{\alpha}_\epsilon$ has (at most) countably many real zeros, a finite number of real zeros in each bounded interval, and zeros of finite multiplicity.  See e.g. \cite{dimitrov} for examples of stronger conditions that give more precise control over these zeros.

Given this assumption on $\alpha$, we find that the general (formal) solution to (\ref{prob:intF}) is:
\begin{align}\label{sol:intF}
\hat{u}_\epsilon(\xi)=-\hat{F}(\xi)/\hat{\alpha}_\epsilon(\xi)+\sum_{\sum_{m=0}^k|\hat\alpha^{(m)}_\epsilon(\xi_{j,\epsilon})|=0}A_{j,k}\delta_{\xi_{j,\epsilon}}^{(k)}(\xi),\hspace{4 mm}\xi\in\mathbb{R},
\end{align}
where $g^{(k)}$ is the $k$th derivative of $g$, $\delta_a(x)=\delta_0(x-a)$ is the Dirac measure centered at $a$, and each $A_{j,k}$ is an arbitrary constant.  The lower sum selects over roots $\xi_{j,\epsilon}$ of $\hat\alpha_\epsilon(\xi)$ with multiplicity $k+1$.

We interpret this expression as a formal infinite series.  Note that, by the anti-symmetry condition (\ref{kprop:anti}), we have that $\hat{\alpha}_\epsilon$ is also anti-symmetric, so the $A_{0,0}\,\delta_0$ term always exists.  We order the zeros like $\dots<\xi_{-n}<\xi_{-n+1}<\dots<\xi_0=0<\dots<\xi_{n-1}<\xi_n<\dots$.  By the anti-symmetry of $\hat\alpha_\epsilon$, we have $\xi_{-j,\epsilon}=-\xi_{j,\epsilon}$.

Applying the inverse Fourier transform to (\ref{sol:intF}) and renaming the arbitrary constants $A_{jk}$ gives the following formal solution to (\ref{prob:int}):
\begin{align}\label{sol:int}
u_\epsilon(t)=-\mathcal{F}^{-1}[\hat{F}/\hat{\alpha}_\epsilon](t)+\sum_{\sum_{m=0}^k|\hat\alpha^{(m)}_\epsilon(\xi_{j,\epsilon})|=0}A_{j,k}t^{k}e^{2\pi i\xi_{j,\epsilon}t}.
\end{align}
Note that the $j=k=0$ term corresponds to $A_{0,0}=$ constant, which also occurs in the classical antiderivative: $u(t)=\int F(t)\mathrm dt+A_{0,0}$.  The first term in \eqref{sol:int} is analogous to the integral of $F$, and is exactly such if $\hat\alpha_\epsilon=-2\pi i\xi$.

\subsection{Examples}
\label{subsec:ex}
We are interested in the nonlocal derivative $D_{\alpha,\epsilon}$ being an extension of the classical derivative $d/dt$, so we would like to know for which kernels $\alpha$ these operators have similar properties.
%the similarities between these operators are strongest.  

The classical antiderivative $u(t)=A+\int F(t)\mathrm dt$ only involves one arbitrary constant, while the nonlocal antiderivative (\ref{sol:int}) involves, in general, an infinite number of such constants.  We observe that requiring $|\hat{\alpha}^{(1)}(0)|>0$ and  $|\hat{\alpha}(\xi)|>0$ when $|\xi|>0$ results in all of the $A_{j,k}$ in (\ref{sol:int}) vanishing except for $A_{0,0}$.  The first property holds by (\ref{kprop:int}).  We can guarantee the second property by requiring that $\alpha(s)$ be a decreasing positive function for $s>0$; for example, $\alpha(s)=\text{sgn}(s)\beta(|s|)$ for any $\beta(|s|)$ decreasing with $|s|$ satisfies this condition; of course, there are many more functions, such as $\alpha(s)=s\exp(-|s|)$, that do not need to be decreasing to satisfy this condition.  For a similar problem on the characterization of the positivity of Fourier transforms, see \cite{giraud}.  

Thus, for these types of $\hat{\alpha}_\epsilon$, (\ref{sol:int}) becomes:
\begin{align*}
u_\epsilon(t)=A-\mathcal{F}^{-1}[\hat F/\hat\alpha_\epsilon](t),
\end{align*}
where $A$ is an arbitrary constant.  This is much more analogous to classical integration.

As an explicit example, consider $\alpha_\epsilon(s)=\text{sgn}(s)\exp(-|s|/\epsilon)/2\epsilon^2$ and $F(t)=1/(1+t^2)$.  The classical antiderivative of this function is obviously $u(t)=A+\arctan(t)$.  On the other hand, for this $\alpha_\epsilon$, its nonlocal antiderivative is $u_\epsilon(t)=u(t)+2\epsilon^2t/(1+t^2)^2$.  We clearly have convergence of $u_\epsilon$ to $u$ as $\epsilon\to 0$, so this nonlocal antiderivative extends the classical one.

For another example, consider $\alpha(s)=\sin(\pi s)\mathds{1}_{(-1,1)}(s)$.  Then the Fourier transform of $\alpha_\epsilon$ is $\hat\alpha_\epsilon(\xi)=i\sin(2\pi\epsilon\xi)/[\epsilon(4\epsilon^2\xi^2-1)]$, which is zero at $\xi=0,\pm 1/\epsilon,\pm 3/2\epsilon,\pm 2/\epsilon,...$.  Indeed, one can directly check with integration that $-\alpha_\epsilon\ast g_n\equiv 0$ for $g_n(t)=\exp(n\pi i t/\epsilon)$ and $n=0,\pm 2,\pm 3,...$, which agrees with (\ref{sol:int}).  So, in addition to annihilating constants, the nonlocal derivative $D_{\alpha,\epsilon}$ for this kernel actually annihilates an infinite dimensional set of functions.  This is a stark contrast to the classical derivative.  Note, however, that as $\epsilon\to 0$, each nonzero $\xi_{j,\epsilon}$ tends to infinity, so, in a weak sense, the nonlocal antiderivative still recovers the classical antiderivative.

\subsection{Convergence to classical solution}
\label{subsec:conv}
We demonstrate that \eqref{sol:int} converges weakly to the classical solution of $u'=F$, namely $u=A+\int F$.  There are two types of terms in \eqref{sol:int} that we need to consider: the homogeneous terms (summation) from the equation $D_{\alpha,\epsilon}u_\epsilon=0$ and the inhomogeneous term that involves $F$.

\subsubsection{Homogeneous part}
\label{subsubsec:hom}
We first show that the zero set of $\hat\alpha_\epsilon$ spreads away from the origin as $\epsilon\to 0$.  The scaling assumption in \eqref{kprop:scale} is critical here.
\begin{lem}[Zero Set]\label{lem:zero}
Let $\alpha_\epsilon$ satisfy the conditions of Definitions \ref{def:der} and \ref{def:analytic}.  Let $\xi_{j,\epsilon}$ denote the $j$th zero of $\hat\alpha_\epsilon$ as in \eqref{sol:intF}.  Then, for each $j=1,2,3,...$, we have $\xi_{\pm j}\to\pm\infty$ as $\epsilon\to 0$.
\end{lem}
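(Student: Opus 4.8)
The plan is to reduce the whole statement to an elementary scaling identity for Fourier transforms together with the analyticity of $\hat\alpha$ already recorded in \cref{subsec:gen}. First I would carry out the change of variables $s=\epsilon y$ in the defining integral: combining \eqref{grad}, \eqref{fourier} and the scaling rule \eqref{kprop:scale} yields
\[
\hat\alpha_\epsilon(\xi)\;=\;\sigma_\epsilon\,\epsilon\,\hat\alpha(\epsilon\xi),\qquad
\sigma_\epsilon\epsilon\;=\;\Bigl(\epsilon\!\int_{\mathbb{R}}\!s\alpha(s)\,\mathrm ds\Bigr)^{-1},
\]
and, differentiating, $\hat\alpha_\epsilon^{(k)}(\xi)=\sigma_\epsilon\,\epsilon^{k+1}\,\hat\alpha^{(k)}(\epsilon\xi)$ for every $k\ge 0$. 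By \eqref{kprop:int} the prefactor $\sigma_\epsilon\epsilon$ is finite and nonzero, so the real zeros of $\hat\alpha_\epsilon$ are exactly the $\xi$ with $\epsilon\xi$ a zero of $\hat\alpha$, and the multiplicity of $\hat\alpha_\epsilon$ at such a $\xi$ equals the multiplicity of $\hat\alpha$ at $\epsilon\xi$.

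Second, I would invoke what was established just before the lemma: under \eqref{kprop:int}, \eqref{kprop:anti} and \eqref{kprop:analytic} the function $\hat\alpha$ is real-analytic on $\mathbb{R}$, and it is not identically zero since $\hat\alpha'(0)=-2\pi i\int_{\mathbb{R}}s\alpha(s)\,\mathrm ds\ne 0$. Hence the zeros of $\hat\alpha$ form a fixed, $\epsilon$-independent discrete set, finite in each bounded interval and each of finite multiplicity; by the anti-symmetry of $\hat\alpha$ this set is symmetric about the origin and contains $0$. Writing its nonnegative elements as $0=\eta_0<\eta_1<\eta_2<\cdots$ and $\eta_{-j}=-\eta_j$, the dilation $\xi\mapsto\xi/\epsilon$ is order-preserving and fixes $0$, so it is compatible with the labeling in \eqref{sol:intF}, giving $\xi_{j,\epsilon}=\eta_j/\epsilon$ for every $j$.

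Finally, for each fixed $j\ge 1$ the value $\eta_j>0$ is independent of $\epsilon$, so $\xi_{j,\epsilon}=\eta_j/\epsilon\to+\infty$ as $\epsilon\to 0^+$, and by symmetry $\xi_{-j,\epsilon}=-\eta_j/\epsilon\to-\infty$; if $\hat\alpha$ has only finitely many zeros the assertion is vacuous for the remaining indices. There is no serious obstacle here — the argument is essentially the scaling identity — but the one point deserving care is the compatibility of the index labeling of \eqref{sol:intF} with the dilation, namely that $\xi\mapsto\xi/\epsilon$ preserves the ordering of the zeros and keeps $\xi_0=0$ fixed, which is immediate since $\epsilon>0$. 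It is worth emphasizing that the explicit normalization \eqref{kprop:scale} is precisely what forces the zero set of $\hat\alpha_\epsilon$ to be a rigid rescaling of one fixed set, so that each individual zero is driven to infinity rather than drifting in some uncontrolled way.
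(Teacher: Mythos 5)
Your proof is correct and follows essentially the same route as the paper's: both rest on the scaling identity $\hat\alpha_\epsilon(\xi)=\sigma_\epsilon\epsilon\,\hat\alpha(\epsilon\xi)$, which forces the zero set of $\hat\alpha_\epsilon$ to be the fixed zero set of $\hat\alpha$ dilated by $1/\epsilon$, so each nonzero root $\bar\xi_j/\epsilon\to\pm\infty$. The paper's version is terser (it substitutes $t\mapsto\epsilon s$ directly in the vanishing Fourier-sine integral and immediately reads off $\hat\alpha(\epsilon\xi_{j,\epsilon})=0$), whereas you additionally spell out the nonvanishing of the prefactor, the preservation of multiplicities via $\hat\alpha_\epsilon^{(k)}(\xi)=\sigma_\epsilon\epsilon^{k+1}\hat\alpha^{(k)}(\epsilon\xi)$, and the compatibility of the ordering with the dilation — all implicit in the paper but worth making explicit.
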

\begin{proof}
Since $\xi_{-j,\epsilon}=-\xi_{j,\epsilon}$, it suffices to show that $\xi_{j,\epsilon}\to\infty$ for each $j=1,2,3,...$.  At a zero $\xi_{j,\epsilon}$, we have:
\begin{align*}
\hat\alpha_\epsilon(\xi_{j,\epsilon})=0=-i\int_{-\infty}^\infty\sin(2\pi\xi_{j,\epsilon} t)\alpha_\epsilon(t)\mathrm dt.
\end{align*}
Using the scaling in \eqref{kprop:scale}, we can rewrite this after letting $t\to \epsilon s$ as:
\begin{align}\label{alphahat}
0=-i\int_{-\infty}^\infty\sin(2\pi\epsilon\xi_{j,\epsilon} s)\alpha(s)\mathrm ds=\hat\alpha(\epsilon\xi_{j,\epsilon})=:\hat\alpha(\bar\xi_j),
\end{align}
where $\bar\xi_j:=\epsilon\xi_{j,\epsilon}$ is the $j$th root of $\hat\alpha$ and does not depend on $\epsilon$.  Thus, $\xi_{j,\epsilon}=\bar\xi_j/\epsilon\to\infty$ as $\epsilon\to 0$.
\end{proof}

We can now show that the homogeneous part of \eqref{sol:int} converges weakly to the classical solution (i.e. the constant function) as $\epsilon\to 0$.  We first consider finite sums in \eqref{sol:int}.
\begin{thm}[Finite sums]\label{thm:conv:int:fin}
Let $u_\epsilon$ be the solution of \eqref{prob:int} given by \eqref{sol:int} for $F=0$, and 
%let $\alpha_\epsilon$ supported in $[-\epsilon,\epsilon]$ satisfy conditions \eqref{kprop:scale}-\eqref{kprop:anti} and \eqref{kprop:pos}
let $\alpha_\epsilon$ satisfy the conditions of Definitions \ref{def:der} and \ref{def:analytic}.  Suppose that the formal sum in \eqref{sol:int} is finite.  Then $u_\epsilon\rightharpoonup A_{0,0}$ as $\epsilon\to 0$.
\end{thm}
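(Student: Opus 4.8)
The plan is to pair $u_\epsilon$ with an arbitrary test function $\psi\in C^\infty_c$ and use Lemma \ref{lem:zero} together with the Riemann--Lebesgue lemma to annihilate every term of the finite sum except the constant $A_{0,0}$.

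First I would pin down the structure of the sum. From the scaling \eqref{kprop:scale}, substituting $t\mapsto\epsilon s$ gives $\hat\alpha_\epsilon(\xi)=(\epsilon\,\alpha_{(1)})^{-1}\hat\alpha(\epsilon\xi)$, so the zeros of $\hat\alpha_\epsilon$ are precisely $\xi_{j,\epsilon}=\bar\xi_j/\epsilon$, where the $\bar\xi_j$ are the $\epsilon$-independent zeros of $\hat\alpha$, and every $\xi_{j,\epsilon}$ inherits the multiplicity of the corresponding $\bar\xi_j$. In particular, by \eqref{kprop:int}--\eqref{kprop:scale} one has $\hat\alpha_\epsilon'(0)=-2\pi i\int_{\mathbb R}s\,\alpha_\epsilon(s)\,\mathrm ds=-2\pi i\neq 0$, so $\xi_0=0$ is always a simple zero; hence the only index-$0$ term appearing in \eqref{sol:int} is $A_{0,0}t^0e^0=A_{0,0}$, and no spurious polynomial term $A_{0,k}t^k$ with $k\ge 1$ occurs. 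Thus
\begin{align*}
u_\epsilon(t)=A_{0,0}+\sum_{j\neq 0,\,k}A_{j,k}\,t^k e^{2\pi i\xi_{j,\epsilon}t},
\end{align*}
a finite sum by hypothesis, whose index set and whose constants $A_{j,k}$ are fixed independently of $\epsilon$; only the frequencies $\xi_{j,\epsilon}$ move as $\epsilon\to 0$.

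Next I would compute, for $\psi\in C^\infty_c$,
\begin{align*}
(u_\epsilon,\psi)=A_{0,0}\int_{\mathbb R}\psi(t)\,\mathrm dt+\sum_{j\neq 0,\,k}A_{j,k}\int_{\mathbb R}t^k\psi(t)\,e^{2\pi i\xi_{j,\epsilon}t}\,\mathrm dt.
\end{align*}
For each fixed pair $(j,k)$ with $j\neq 0$, the function $t\mapsto t^k\psi(t)$ lies in $C^\infty_c\subset L^1(\mathbb R)$, and $\int_{\mathbb R}t^k\psi(t)e^{2\pi i\xi t}\,\mathrm dt=\widehat{t^k\psi}(-\xi)$, which by the Riemann--Lebesgue lemma tends to $0$ as $|\xi|\to\infty$. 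Since $j\neq 0$, Lemma \ref{lem:zero} gives $\xi_{j,\epsilon}=\bar\xi_j/\epsilon\to\pm\infty$ as $\epsilon\to 0$, so each such integral vanishes in the limit. Because the sum is finite, the entire correction term tends to $0$, whence $(u_\epsilon,\psi)\to A_{0,0}\int_{\mathbb R}\psi=(A_{0,0},\psi)$ for every $\psi\in C^\infty_c$, i.e.\ $u_\epsilon\rightharpoonup A_{0,0}$.

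I do not expect a serious obstacle here once the scaling identity $\hat\alpha_\epsilon(\xi)=c_\epsilon\hat\alpha(\epsilon\xi)$ and Lemma \ref{lem:zero} are available; the two points that need genuine care are (a) verifying that $\xi=0$ is a \emph{simple} zero, which is exactly where the normalization in \eqref{kprop:scale} and the finite dipole moment \eqref{kprop:int} enter, and (b) keeping the index set and the constants $A_{j,k}$ fixed while $\epsilon$ varies, so that the finiteness hypothesis applies uniformly. The genuinely harder situation, in which the sum in \eqref{sol:int} is infinite, is left to a separate argument, since there one must balance the growth of $\sum_j|A_{j,k}|$ against the decay rate of $\widehat{t^k\psi}$.
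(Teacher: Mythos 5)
Your argument is correct and follows the same route as the paper's proof: pair $u_\epsilon$ with $\psi\in C^\infty_c$, apply Lemma \ref{lem:zero} to send the nonzero frequencies $\xi_{j,\epsilon}$ to infinity, and invoke the Riemann--Lebesgue lemma (with $t^k\psi(t)\in L^1$) to kill each nonconstant term, the finiteness of the sum justifying the term-by-term limit. One thing you supply that the paper's proof leaves implicit is the verification that $\xi_0=0$ is a \emph{simple} zero of $\hat\alpha_\epsilon$, via $\hat\alpha_\epsilon'(0)=-2\pi i\neq 0$; this matters, since if $0$ had multiplicity $m>1$ the sum would contain terms $A_{0,k}t^k$ with $1\le k\le m-1$ whose frequency $\xi_{0,\epsilon}=0$ does not escape to infinity, and the Riemann--Lebesgue argument would not dispose of them. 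Making this explicit, and noting that the index set and coefficients $A_{j,k}$ are $\epsilon$-independent because the zeros scale as $\xi_{j,\epsilon}=\bar\xi_j/\epsilon$, tightens the proof without changing its structure.
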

\begin{proof}
Let $\psi$ be an arbitrary function in $C^\infty_c$.  For any $j:|j|>1$ and any $k\ge 0$, the Riemann-Lebesgue lemma and Lemma \ref{lem:zero} show that $(t^ke^{2\pi i\xi_{j,\epsilon}t},\psi)\to 0$ as $\epsilon\to 0$.  Since $\psi$ is arbitrary, it follows that each nonconstant term in the formal sum \eqref{sol:int} converges weakly to zero.  The result follows since the sum is finite.
\end{proof}

We can extend this to infinite series if we restrict the formal sum in \eqref{sol:int} to converge in a suitable sense.  It is convenient to map the terms of the series to the sequence $\{U_{n,\epsilon}\}$ and to consider the partial sums $u_{N,\epsilon}=\sum_{n=1}^NU_{n,\epsilon}$.  To interchange limit operations, we also make use of the counting measure $d\mu$: $\int f_nd\mu_n:=\sum_{n=1}^\infty f_n$.
\begin{thm}[Infinite series]\label{thm:conv:int}
Let $u_\epsilon$ and $\alpha_\epsilon$ be as in Theorem \ref{thm:conv:int:fin}.  Suppose that the following hypotheses hold:  

(i)  For each $\epsilon>0$, $u_{N,\epsilon}(t)\to u_{\epsilon}(t)$ a.e. in $\mathbb{R}$ as $N\to\infty$.

(ii)  For each $\epsilon$, the sequence $\{u_{N,\epsilon}\}_{N=1}^\infty$ of partial sums is uniformly locally integrable.  That is, for every $n$ and each $E\subset\mathbb{R}$ with $|E|<\infty$, we have $\left|\int_{E}u_{N,\epsilon}(t)\mathrm dt\right|\le \mathcal{U}_\epsilon(|E|)$, where $\mathcal{U}_\epsilon>0$ is locally bounded, and $\mathcal{U}_\epsilon(x)\to 0$ as $x\to 0$.

(iii)  For any $\psi$ in $C^\infty_c$, the set of integrals $\{(U_{n,\epsilon},\psi)\}_{\epsilon>0}$ is uniformly bounded for almost every $n$ by a summable sequence $\{v_{n,\psi}\}\subset\mathbb{R}_+$.  That is, for each $\epsilon>0$ and each $\psi$ in $C^\infty_c$, we have $v_{n,\psi}\ge |(U_{n,\epsilon},\psi)|$ for every $n=1,2,3,...$ except for a finite set, and $\sum_{n=1}^\infty v_{n,\psi}<\infty$.

Then $u_\epsilon\rightharpoonup A_{0,0}$ as $\epsilon\to 0$.
\end{thm}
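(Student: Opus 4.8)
The plan is to test $u_\epsilon$ against an arbitrary $\psi\in C^\infty_c$ and show $(u_\epsilon,\psi)\to A_{0,0}\int_{\mathbb R}\psi\,\mathrm dt=(A_{0,0},\psi)$, which is precisely the asserted weak convergence. Write the formal solution \eqref{sol:int} with $F=0$ as the series $u_\epsilon=\sum_{n=1}^\infty U_{n,\epsilon}$, with the enumeration chosen so that $U_{1,\epsilon}\equiv A_{0,0}$ is the constant ($j=k=0$) term and, for $n\ge 2$, $U_{n,\epsilon}(t)=A_{j(n),k(n)}\,t^{k(n)}e^{2\pi i\xi_{j(n),\epsilon}t}$ with $j(n)\neq 0$; set $u_{N,\epsilon}=\sum_{n=1}^N U_{n,\epsilon}$.

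\textbf{Step 1: evaluating the limit distribution against $\psi$.} First one must justify $(u_\epsilon,\psi)=\sum_{n=1}^\infty(U_{n,\epsilon},\psi)$, i.e.\ interchange the summation (the limit $N\to\infty$) with the action of $u_\epsilon$ on $\psi$. Since $\psi$ is supported on a set $E$ of finite measure, hypothesis (i) gives $u_{N,\epsilon}\psi\to u_\epsilon\psi$ a.e.\ on $E$, and hypothesis (ii) supplies the uniform-in-$N$ control needed to pass this convergence under the integral: either directly via Vitali's convergence theorem, or, more robustly, by integrating by parts — (ii) says the partial-sum antiderivatives $t\mapsto\int_0^t u_{N,\epsilon}$ are equicontinuous and locally equibounded in $N$, so Arzel\`a--Ascoli gives their uniform-on-compacts convergence to $\int_0^t u_\epsilon$, whence $\int_E u_{N,\epsilon}\psi\,\mathrm dt=-\int_E\big(\int_0^t u_{N,\epsilon}\big)\psi'\,\mathrm dt\to-\int_E\big(\int_0^t u_\epsilon\big)\psi'\,\mathrm dt=(u_\epsilon,\psi)$. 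Thus $(u_\epsilon,\psi)=\sum_{n=1}^\infty(U_{n,\epsilon},\psi)$.

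\textbf{Step 2: term-by-term limits as $\epsilon\to0$.} The $n=1$ term is $\epsilon$-independent: $(U_{1,\epsilon},\psi)=A_{0,0}\int_{\mathbb R}\psi\,\mathrm dt=(A_{0,0},\psi)$. For each fixed $n\ge2$, $(U_{n,\epsilon},\psi)=A_{j(n),k(n)}\int_{\mathbb R}t^{k(n)}e^{2\pi i\xi_{j(n),\epsilon}t}\psi(t)\,\mathrm dt$; since $t^{k(n)}\psi\in L^1(\mathbb R)$ and, by Lemma \ref{lem:zero}, $\xi_{j(n),\epsilon}\to\pm\infty$ as $\epsilon\to0$, the Riemann--Lebesgue lemma gives $(U_{n,\epsilon},\psi)\to0$ — exactly the mechanism of Theorem \ref{thm:conv:int:fin}.

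\textbf{Step 3: interchanging $\sum_n$ with $\lim_{\epsilon\to0}$, and conclusion.} It remains to show $\sum_{n\ge2}(U_{n,\epsilon},\psi)\to0$. Fix any sequence $\epsilon_m\downarrow0$. By hypothesis (iii) there is a summable sequence $\{v_{n,\psi}\}$ dominating $|(U_{n,\epsilon_m},\psi)|$ for all $m$ and all $n$ outside a finite set $S$ (independent of $m$); on $\mathbb N\setminus S$ the dominated convergence theorem with respect to the counting measure $\mathrm d\mu$ — with Step 2 providing the pointwise-in-$n$ limit $0$ and $\{v_{n,\psi}\}$ the dominant — gives $\sum_{n\notin S}(U_{n,\epsilon_m},\psi)\to0$, while the finitely many remaining terms $\sum_{n\in S}(U_{n,\epsilon_m},\psi)\to0$ again by Step 2. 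Combining with Steps 1--2, $(u_{\epsilon_m},\psi)\to(A_{0,0},\psi)$; as $\epsilon_m\downarrow0$ and $\psi\in C^\infty_c$ were arbitrary, $u_\epsilon\rightharpoonup A_{0,0}$. The main obstacle is Step 1: converting the ``uniformly locally integrable'' hypothesis into a genuine interchange of the $N\to\infty$ limit with testing against $\psi$, while keeping the estimates uniform enough in $N$ that the subsequent $\epsilon\to0$ passage is unaffected; the integration-by-parts/Arzel\`a--Ascoli route seems the cleanest. By comparison, the $\epsilon\to0$ analysis is routine once Lemma \ref{lem:zero} and hypothesis (iii) are in hand.
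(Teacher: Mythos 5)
Your proof is correct and follows essentially the same route as the paper: Vitali's convergence theorem (via hypothesis (ii)) to justify $(u_\epsilon,\psi)=\sum_n(U_{n,\epsilon},\psi)$, then the dominated convergence theorem with respect to the counting measure (via hypothesis (iii)) to interchange $\sum_n$ with $\lim_{\epsilon\to0}$, with Lemma~\ref{lem:zero} and Riemann--Lebesgue supplying the pointwise-in-$n$ limits. You are in fact slightly more careful than the paper in two small respects: you explicitly isolate the constant term $U_{1,\epsilon}=A_{0,0}$ (the paper's final line $\int 0\,\mathrm d\mu_n=0$ glosses over it), and you treat separately the finite exceptional set allowed by hypothesis (iii); your alternative integration-by-parts/Arzel\`a--Ascoli route for Step 1 is a legitimate substitute for Vitali but is not needed.
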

\begin{proof}
Fix an arbitrary $\psi$ in $C^\infty_c$.  Since $|\text{supp}\,\psi|<\infty$ and (ii) holds, Vitali's convergence theorem applies to $\{u_{N,\epsilon}\}_{N=1}^\infty$.  We compute:
\begin{align*}
\lim_{\epsilon\to 0}(u_\epsilon,\psi)=\lim_{\epsilon\to 0}\left(\lim_{N\to\infty}u_{N,\epsilon},\psi\right)=\lim_{\epsilon\to 0}\lim_{N\to\infty}\sum_{n=1}^N(U_{n,\epsilon},\psi)=\lim_{\epsilon\to 0}\int(U_{n,\epsilon},\psi)d\mu_n.
\end{align*}
By (iii), the dominated convergence theorem applies to the last integral, so by Theorem \ref{thm:conv:int:fin}, we obtain:
\begin{align*}
\lim_{\epsilon\to 0}(u_\epsilon,\psi)=\int\lim_{\epsilon\to 0}(U_{n,\epsilon},\psi)d\mu_n=\int 0\,d\mu_n=0.
\end{align*}
\end{proof}

\begin{rem}
If we allow the coefficients $A_{j,k}$ to vanish with $\epsilon$, then this weak convergence can be made strong (say, in $L^\infty_{loc}$).
\end{rem}

\begin{rem}
Condition (ii) is satisfied if $u_{N,\epsilon}\to u_\epsilon$ in $L^1_{loc}$.  Condition (iii) is satisfied if the zeros of $\hat\alpha_\epsilon$ have a maximum multiplicity of $M\ge 1$ and if $\sum_{j,k}|A_{j,k}|<\infty$, where the coefficients $A_{j,k}$ are as in \eqref{sol:int}.  Indeed, since $U_{n,\epsilon}(t)=A_nt^{k_n}e^{2\pi i\xi_{j_n}t}$ for some $j_n$, $k_n$, and $A_n=:A_{j_n,k_n}$, we have $|U_{n,\epsilon}(t)|\le |A_n||t|^{M}$ for each $\epsilon$.  This gives:
\begin{align*}
\sum_n|(U_{n,\epsilon},\psi)|\le\sum_n|A_n|\int_{\text{supp}\,\psi}\bigl |t^{M}\psi(t)\bigr|\mathrm dt\le 2T_\psi^{M+1}\|\psi\|_\infty\sum_{n}|A_n|, 
\end{align*}
where $T_\psi=\max(|\inf\text{supp}\,\psi|,|\sup\text{supp}\,\psi|)$.
\end{rem}

\subsubsection{Inhomogeneous part}
\label{subsubsec:inh}

We now investigate the convergence of the inhomogeneous term in \eqref{sol:int} that depends on the ``derivative" $F$.
\begin{defn}
We denote the inhomogeneous part of $u_\epsilon$ in \eqref{sol:int} as follows:
\begin{align}\label{vdef}
\begin{split}
v_\epsilon(t)=-\mathcal{F}^{-1}[\hat F/\hat\alpha_\epsilon](t).
\end{split}
\end{align}
\end{defn}
We first show under what conditions this formal solution $v_\epsilon$ is well-defined.  Then, provided that it is well-defined, we show that $v_\epsilon\rightharpoonup v:=\int F$, or that it converges to a classical antiderivative in the weak sense of tempered distributions.

A necessary condition for $v_\epsilon$ to be well-defined is for $\hat F$ to make sense.  We recall the definition of a tempered distribution.
\begin{defn}\label{def:temp}
Let $\mathcal{S}$ be the Schwartz space of infinitely differentiable rapidly decreasing test functions.  Equivalently, $\psi$ is in $\mathcal{S}$ if it is in $C^\infty$ and if $\lim_{|t|\to\infty}|t|^n\psi(t)=0$ for every integer $n\ge 0$.  The space of tempered distributions $\mathcal{S}'$ is the continuous dual to $\mathcal{S}$ (i.e. the set of bounded linear functionals on $\mathcal{S}$).
\end{defn}

\begin{defn}\label{def:ftemp}
We require that $F$ in \eqref{prob:int} and \eqref{sol:int} be a tempered distribution, and we allow it to be a generalized function.  
\end{defn}

\begin{rem}
Unfortunately, this is a very strong condition.  In the classical case, we could find an antiderivative of \emph{any} locally integrable function $F$ (say, $F(t)=e^{t}$).  A similar problem concerns solutions to $\Delta u=0$; there are many solutions to this equation, such as $e^{x_1}\cos x_2$, that are not tempered distributions, and Fourier transform techniques cannot recover these solutions.
\end{rem}

\begin{rem}
Note that, from this restriction and Definition \ref{def:analytic}, the distributional product $\hat F/\hat\alpha_\epsilon$ always makes sense, since we have that $2\pi i/\hat\alpha_\epsilon-1/\xi$ is a $C^\infty$ function.  Of course, we still need to show that it makes sense as a \emph{tempered} distribution.
\end{rem}

We now wish for the inverse Fourier transform in \eqref{vdef} to make sense.  There are two problems with this.  The first problem concerns the possibly infinite number of singularities of $1/\hat\alpha_\epsilon$, and the second problem comes from the behavior of $1/\hat\alpha_\epsilon$ at infinity.  If each of these behaviors is not sufficiently controlled, then $\hat F/\hat\alpha_\epsilon$ fails to be a tempered distribution, and \eqref{vdef} makes no sense. 

To solve the first problem, we could impose that $1/\hat\alpha$ have a finite number of singularities.  However, for simplicity, we will restrict $1/\hat\alpha$ to be singular only at the origin.  The more general case proceeds similarly.  Note that this means we no longer need the full strength of Definition \ref{def:analytic}, but we keep it for simplicity. 
\begin{defn}\label{def:alphapos}
We require that $\hat\alpha(\xi)=0$ for $\xi$ in $\mathbb{R}$ only if $\xi=0$.  A sufficient condition for this (in addition to those in Definitions \ref{def:der} and \ref{def:analytic}) is as follows.  We require that $\alpha(s)=\text{sgn}(s)\beta(|s|)$, where $\beta\ge 0$ is a decreasing function that is strictly decreasing a.e. in $(0,a_\alpha)$ for some $a_\alpha>0$.
\end{defn}
\begin{rem}
The condition is sufficient by the alternating series test.  Indeed, the Fourier integral $\int_0^\infty\sin(2\pi\xi s)\alpha_\epsilon(s)\mathrm ds=\sum_{n=0}^\infty(-1)^n\int_0^{1/2\xi}\sin(2\pi\xi s)\alpha_\epsilon(s+n/2\xi)\mathrm ds>0$ if $\xi>0$ and if this condition is satisfied.  Examples of admissible $\beta$ include $(1-s^m)\mathds{1}_{(0,1)}(s)$ for $m>0$, $\exp(-|s|^m)$ for $m>0$, and $|s|^{-m}\mathds{1}_{(0,1)}$ for $-2<m<0$.
\end{rem}

As it turns out, this condition almost solves the second problem as well.  We only need two modifications to it.  We gauge the behavior of $\alpha$ near zero using power functions, and we impose that, near zero, $\alpha$ cannot be flatter than a continuous power function.
\begin{defn}\label{def:flat}
There are two cases to consider, depending on how singular $\alpha$ is at zero.  Let $\alpha_{0+}=\lim_{s\searrow 0}\alpha(s)$.  In all cases, we presume the existence of some endpoint $b_\alpha:0<b_\alpha\le a_\alpha$ (cf. Definition \ref{def:alphapos}), some exponent $k_{\alpha}$ that satisfies $-2<k_{\alpha}\neq 0$, and some constants $K_{\pm,\alpha}>0$.

(i)  If $\alpha_{0+}<\infty$, then we require that $-K_{-,\alpha}s^{k_{\alpha}}\le \alpha(s)-\alpha_{0+}\le -K_{+,\alpha}s^{k_{\alpha}}$ for each $s$ in $(0,b_\alpha)$.  Here, $0<k_{\alpha}$, and $K_{-,\alpha}\ge K_{+,\alpha}$.  We assume that $b_\alpha$ can be made small enough so that we can choose $K_{\pm,\alpha}$ that satisfy $K_{+,\alpha}(s+1)^{k_{\alpha}}-K_{-,\alpha}s^{k_{\alpha}}\ge 0$ for each s on $(0,1)$.

(ii)  If $\alpha_{0+}=\infty$, then we require that $K_{-,\alpha}s^{k_{\alpha}}\le \alpha(s)\le K_{+,\alpha}s^{k_{\alpha}}$ on $(0,b_\alpha)$.  Here, $-2<k_{\alpha}<0$, and $K_{-,\alpha}\le K_{+,\alpha}$.  We again assume that, by choosing $b_\alpha$ small enough, we can find $K_{\pm,\alpha}$ such that $K_{-,\alpha}s^{k_\alpha}-K_{+,\alpha}(s+1)^{k_\alpha}\ge 0$ on $(0,1)$.
\end{defn}

\begin{rem}
``Flat functions" like $\alpha(s)=\text{sgn}(s)(1-\exp(-|s|^{-2}))\mathds{1}_{(0,1)}(|s|)$ are not admissible.
\end{rem}

\begin{rem}
By anti-symmetry, analogous inequalities hold for $s<0$.  Although power functions are not the most general gauge functions (e.g. logarithms are also natural choices), they are quite convenient here due to their homogeneity under dilations.
\end{rem}

\begin{rem}
Since $\alpha$ satisfies both the anti-symmetry and positivity conditions of Definitions \ref{def:der} and \ref{def:alphapos}, we see that it cannot be continuous at zero.  It either approaches a constant or blows up as $s\searrow0$.  This means that $\alpha_{0+}>0$.
\end{rem}

To prove that $\hat v_\epsilon$ in \eqref{vdef} is a tempered distribution, given these restrictions on $\alpha$, we need the following two estimates on $\alpha_\epsilon$.  They elucidate its behavior for small and large $|\xi|$.

\begin{lem}[Near-field]\label{lem:est}
Let $\alpha_\epsilon$ satisfy the conditions of Definitions \ref{def:der}, \ref{def:analytic}, and \ref{def:alphapos}.  Then the following estimate holds for each $\xi$ in $\mathbb{R}$:
\begin{align}\label{est}
2\pi|\xi|-C_\alpha\epsilon^2|\xi|^3\le i\hat\alpha_\epsilon(|\xi|)\le 2\pi|\xi|,
\end{align}
where $C_\alpha=4\pi^3\alpha_{(3)}/3\alpha_{(1)}$, and $\alpha_{(j)}=\int_{-\infty}^\infty s^j\alpha(s)\mathrm ds$.
\end{lem}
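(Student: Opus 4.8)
The plan is to reduce the whole statement to two elementary scalar inequalities for the sine function. First I would use the anti-symmetry \eqref{kprop:anti} of $\alpha_\epsilon$: the cosine part of $e^{-2\pi i\xi t}$ integrates to zero against the odd kernel, so $\hat\alpha_\epsilon(\xi)=-i\int_{\mathbb R}\sin(2\pi\xi t)\alpha_\epsilon(t)\,\mathrm dt$, and hence $i\hat\alpha_\epsilon(\xi)$ is real and, the integrand being even in $t$, equals $2\int_0^\infty\sin(2\pi\xi t)\alpha_\epsilon(t)\,\mathrm dt$. Applying the scaling \eqref{kprop:scale} and substituting $t=\epsilon u$ (so that $\epsilon\sigma_\epsilon=1/(\epsilon\,\alpha_{(1)})$) collapses the $\epsilon$-dependence into the argument of the sine:
\[
i\hat\alpha_\epsilon(\xi)=\frac{2}{\epsilon\,\alpha_{(1)}}\int_0^\infty\sin(2\pi\epsilon\xi u)\,\alpha(u)\,\mathrm du .
\]
Since the lemma invokes the positivity condition of Definition \ref{def:alphapos}, we have $\alpha(u)\ge 0$ for $u>0$; together with \eqref{kprop:analytic} this makes $\alpha_{(1)}=2\int_0^\infty u\alpha(u)\,\mathrm du$ and $\alpha_{(3)}=2\int_0^\infty u^3\alpha(u)\,\mathrm du$ finite and strictly positive.

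Next I would fix $\xi\ge0$ — which suffices, since the bounds are stated at the point $|\xi|$ — so that for $u>0$ the argument $x:=2\pi\epsilon\xi u$ is nonnegative and the elementary estimates $x-\tfrac{x^3}{6}\le\sin x\le x$ hold. Multiplying by $\alpha(u)\ge0$ and integrating over $(0,\infty)$ (legitimate since $\alpha\ge0$ there and the first and third moments converge), the upper inequality yields
\[
\int_0^\infty\sin(2\pi\epsilon\xi u)\alpha(u)\,\mathrm du\le 2\pi\epsilon\xi\int_0^\infty u\alpha(u)\,\mathrm du=\pi\epsilon\xi\,\alpha_{(1)},
\]
so $i\hat\alpha_\epsilon(\xi)\le 2\pi\xi$. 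The lower inequality similarly gives
\[
\int_0^\infty\sin(2\pi\epsilon\xi u)\alpha(u)\,\mathrm du\ge \pi\epsilon\xi\,\alpha_{(1)}-\frac{(2\pi\epsilon\xi)^3}{6}\cdot\frac{\alpha_{(3)}}{2},
\]
and multiplying by $2/(\epsilon\,\alpha_{(1)})$ produces $i\hat\alpha_\epsilon(\xi)\ge 2\pi\xi-\frac{(2\pi)^3\alpha_{(3)}}{6\,\alpha_{(1)}}\,\epsilon^2\xi^3$. Since $(2\pi)^3/6=4\pi^3/3$, the coefficient is exactly $C_\alpha=4\pi^3\alpha_{(3)}/3\alpha_{(1)}$; replacing $\xi$ by $|\xi|$ gives \eqref{est}.

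There is no real obstacle in this lemma: the content is entirely in the bookkeeping of constants and in recognizing that two structural features make it work — the anti-symmetry of $\alpha$ turns $\hat\alpha_\epsilon$ into a (real) sine transform, and the sign condition of Definition \ref{def:alphapos} is precisely what lets the pointwise inequalities for $\sin$ survive integration against $\alpha$. The only point needing a sentence of justification is the interchange of these inequalities with the integral, which is immediate from $\alpha\ge0$ on $(0,\infty)$ and the finiteness of $\alpha_{(1)},\alpha_{(3)}$ guaranteed by \eqref{kprop:analytic}. (If one wanted to drop Definition \ref{def:alphapos} and keep only Definition \ref{def:analytic}, the same argument applies on the set where $\alpha\ge0$ but one would have to track the complementary contribution, which is why the cleaner statement retains the positivity hypothesis.)
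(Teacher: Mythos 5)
Your proof is correct and follows the same route as the paper: reduce to a Fourier-sine integral on $(0,\infty)$ via anti-symmetry, pull out the $\epsilon$-dependence by the scaling \eqref{kprop:scale}, then integrate the pointwise bounds on $\sin$ against $\alpha\ge 0$. The paper phrases the lower bound via Taylor's theorem with a Lagrange remainder $\cos\theta(s)\le 1$, which is just a restatement of your elementary inequality $x-x^3/6\le\sin x$ for $x\ge 0$; the constants and the use of Definition \ref{def:alphapos} match exactly.
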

\begin{proof}
Using the anti-symmetry of $\alpha$, we can write the Fourier transform \eqref{fourier} of $\alpha_\epsilon$ as a Fourier-sine transform:
\begin{align*}
-\hat\alpha_\epsilon(\xi)/2i=\int_{0}^\infty\sin(2\pi\xi t)\alpha_\epsilon(t)\mathrm dt.
\end{align*}
Using the scaling in \eqref{kprop:scale}, we can rewrite this after letting $t\to \epsilon s$ as:
\begin{align}\label{alphahat0}
-\hat\alpha_\epsilon(\xi)/2i=\frac{1}{\epsilon\,\alpha_{(1)}}\int_{0}^\infty\sin(2\pi\epsilon\xi s)\alpha(s)\mathrm ds.
\end{align}
Suppose first that $\xi>0$.  Using that $\sin(2\pi\epsilon\xi s)\alpha(s)\le 2\pi\epsilon\xi s\alpha(s)$ in \eqref{alphahat} (note the positivity from Definition \ref{def:alphapos}) and that $\int_0^\infty s\alpha(s)\mathrm ds=\alpha_{(1)}/2$ gives:
\begin{align*}
i\hat\alpha_\epsilon(\xi)\le 2\pi\xi.
\end{align*}
On the other hand, using Taylor's theorem, letting $\theta(s)$ be in $(0,\infty)$, and applying the estimate $\cos(\theta(s))\le 1$ to \eqref{alphahat}, we obtain:
\begin{align*}
i\hat\alpha_\epsilon(\xi)/2&=\pi \xi-\frac{4\pi^3\xi^3\epsilon^2}{3\alpha_{(1)}}\int_{0}^\infty\cos(\theta(s))s^3\alpha(s)\mathrm ds\\
&\ge \pi \xi-2\pi^3\alpha_{(3)}\xi^3\epsilon^2/3\alpha_{(1)}.
\end{align*}
If $\xi<0$, then we reverse the above inequalities.  This gives \eqref{est}.
\end{proof}

\begin{lem}[Far-field]\label{lem:est:alt}
Let $\alpha_\epsilon$ satisfy the conditions of Definitions \ref{def:der}, \ref{def:analytic}, \ref{def:alphapos}, and \ref{def:flat}.  Then, for each $\epsilon>0$ and each $\xi:|\xi|\ge 2\epsilon b_\alpha$ (cf. Definition \ref{def:flat}), the following estimate holds:
\begin{align}\label{est:alt}
i\hat\alpha_\epsilon(|\xi|)\ge C'_{\alpha}\epsilon^{-2-k_\alpha}|\xi|^{-1-k_\alpha},
\end{align}
where $k_\alpha$ is as in Definition \ref{def:flat}, and $C'_\alpha>0$. 
\end{lem}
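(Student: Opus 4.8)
The plan is to reduce \eqref{est:alt} to a lower bound for a Fourier–sine integral, and then to exploit the alternating–series structure produced by the half-periods of the sine: keep only the first two terms of that series and bound their difference below with the power-function gauges of Definition \ref{def:flat}.

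First I would pass, exactly as in \eqref{alphahat0}, to the representation
\begin{align*}
i\hat\alpha_\epsilon(\xi)=\frac{2}{\epsilon\,\alpha_{(1)}}\int_0^\infty\sin(2\pi\epsilon\xi s)\,\alpha(s)\,\mathrm ds,\qquad\xi>0,
\end{align*}
observing that $i\hat\alpha_\epsilon(\xi)$ is real and that $\alpha_{(1)}=2\int_0^\infty s\,\beta(s)\,\mathrm ds>0$ by the positivity hypothesis of Definition \ref{def:alphapos}; since \eqref{est:alt} only involves $i\hat\alpha_\epsilon(|\xi|)$ it is enough to treat $\xi>0$. Set $h:=1/(2\epsilon\xi)$, the half-period of $s\mapsto\sin(2\pi\epsilon\xi s)$, and split $(0,\infty)=\bigcup_{n\ge0}\bigl(nh,(n+1)h\bigr)$. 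The translation $s\mapsto s+nh$ turns the integral into the alternating series $\sum_{n\ge0}(-1)^na_n$ with $a_n=\int_0^h\sin(2\pi\epsilon\xi s)\,\alpha(s+nh)\,\mathrm ds\ge0$; since $\alpha=\beta$ is positive and decreasing on $(0,\infty)$ with $\alpha(s)\to0$ (Definition \ref{def:alphapos} together with \eqref{kprop:analytic}), the $a_n$ form a nonincreasing null sequence, so the alternating-series test gives
\begin{align*}
\int_0^\infty\sin(2\pi\epsilon\xi s)\,\alpha(s)\,\mathrm ds\ \ge\ a_0-a_1\ =\ \int_0^h\sin(2\pi\epsilon\xi s)\,\bigl[\alpha(s)-\alpha(s+h)\bigr]\mathrm ds.
\end{align*}

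The heart of the proof is to bound $\alpha(s)-\alpha(s+h)$ below on $(0,h)$ by a nonnegative multiple of a power. The lower bound on $|\xi|$ in the statement is what keeps $h$ small enough that $2h\le b_\alpha$, so that on $(0,2h)$ the gauge inequalities of Definition \ref{def:flat} are available. In case (ii), for instance, they give $\alpha(s)-\alpha(s+h)\ge K_{-,\alpha}s^{k_\alpha}-K_{+,\alpha}(s+h)^{k_\alpha}=h^{k_\alpha}\bigl[K_{-,\alpha}(s/h)^{k_\alpha}-K_{+,\alpha}(s/h+1)^{k_\alpha}\bigr]$, and this bracket is nonnegative for $s\in(0,h)$ precisely because of the rescaled positivity condition built into Definition \ref{def:flat}(ii); case (i) is handled identically after replacing $\alpha$ by $\alpha-\alpha_{0+}$, producing the bracket $K_{+,\alpha}(s/h+1)^{k_\alpha}-K_{-,\alpha}(s/h)^{k_\alpha}$ and invoking the corresponding clause of Definition \ref{def:flat}(i). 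Inserting this into the previous display and rescaling $s=hu$ (so that $2\pi\epsilon\xi s=\pi u$) collapses all $\epsilon,\xi$ dependence into a power of $h$:
\begin{align*}
\int_0^\infty\sin(2\pi\epsilon\xi s)\,\alpha(s)\,\mathrm ds\ \ge\ h^{1+k_\alpha}\int_0^1\sin(\pi u)\,\bigl[K_{-,\alpha}u^{k_\alpha}-K_{+,\alpha}(u+1)^{k_\alpha}\bigr]\mathrm du\ =:\ \tilde C_\alpha\,h^{1+k_\alpha}.
\end{align*}
Here $\tilde C_\alpha\in(0,\infty)$: the integral converges because $\sin(\pi u)\,u^{k_\alpha}\sim\pi u^{1+k_\alpha}$ as $u\searrow0$ with $1+k_\alpha>-1$ by the standing hypothesis $k_\alpha>-2$, and it is strictly positive since the bracket is nonnegative on $(0,1)$ and strictly positive near $u=0$. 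Substituting $h=1/(2\epsilon\xi)$ and multiplying by $2/(\epsilon\alpha_{(1)})$ yields $i\hat\alpha_\epsilon(\xi)\ge C'_\alpha\,\epsilon^{-2-k_\alpha}\xi^{-1-k_\alpha}$ with $C'_\alpha=2^{-k_\alpha}\tilde C_\alpha/\alpha_{(1)}>0$, which is \eqref{est:alt}.

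The step I expect to be the main obstacle is the middle one: verifying that the elementary power-difference estimate on $\alpha(s)-\alpha(s+h)$ really is available. This is where every clause of Definition \ref{def:flat} enters — the two-sided power bounds, the sign relation between $K_{+,\alpha}$ and $K_{-,\alpha}$, the rescaled nonnegativity on $(0,1)$, and the freedom to shrink $b_\alpha$ — and where the case split on whether $\alpha$ stays bounded at the origin has to be carried out with care. By comparison, the alternating-series reduction and the final convergence/positivity check for $\tilde C_\alpha$ are routine.
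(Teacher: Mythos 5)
Your proof is correct and follows essentially the same route as the paper's: both reduce to the Fourier--sine representation, discard the nonnegative tail of the alternating series to keep only the first full period $a_0 - a_1$, rescale to the fixed interval $(0,1)$, and bound the difference $\alpha(hu)-\alpha(h(u+1))$ via the power-law gauges of Definition~\ref{def:flat}. Your threshold $2h \le b_\alpha$ (i.e.\ $|\xi| \ge 1/(\epsilon b_\alpha)$) is the condition actually required for both arguments of $\alpha$ to stay inside $(0,b_\alpha)$; the paper's statement writes $|\xi|\ge 2\epsilon b_\alpha$ and its proof writes $\xi \ge 1/(2\epsilon b_\alpha)$, which appear to be typos for the same threshold you identified (the use of $\mathds 1_{(1/2\epsilon b_\alpha,\infty)}$ later in Theorem~\ref{thm:conv} supports this reading).
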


\begin{proof}
Suppose that $\xi>0$ (by anti-symmetry, we can reverse the resulting inequality when $\xi<0$).  We use the positivity in Definition \ref{def:alphapos} and the scaling in Definition \ref{def:der} to obtain the following alternating series-type estimate:
\begin{align}\label{est:alt0}
\begin{split}
i\hat\alpha_\epsilon(\xi)/2&=\int_0^\infty\sin(2\pi\xi s)\alpha_\epsilon(s)\mathrm ds\\
&\ge\int_0^{1/\xi}\sin(2\pi\xi s)\alpha_\epsilon(s)\mathrm ds\\
&=\frac{1}{\alpha_{(1)}\epsilon^2\xi}\int_0^1\sin(2\pi s)\alpha(s/\epsilon\xi)\mathrm ds\\
&=\frac{1}{2\alpha_{(1)}\epsilon^2\xi}\int_0^{1}\sin(\pi s)[\alpha(s/2\epsilon\xi)-\alpha((s+1)/2\epsilon/\xi)]\mathrm ds.
\end{split}
\end{align}
We estimate each term using the inequalities in Definition \ref{def:flat}.  To do this, we suppose that $\xi\ge 1/2\epsilon b_\alpha$ (cf. Definition \ref{def:flat}).  For each case, \eqref{est:alt0} becomes:

Case (i):
\begin{align}\nonumber
\begin{split}
i\hat\alpha_\epsilon(\xi)&\ge \frac{1}{2^{k_\alpha}\alpha_{(1)}\epsilon^{2+k_\alpha}\xi^{1+k_\alpha}}\int_0^1\sin(\pi s)[K_{+\alpha}(s+1)^{k_\alpha}-K_{-\alpha}s^{k_\alpha}]\mathrm ds\\
&:=\frac{C'^{(i)}_{\alpha}}{\epsilon^{2+k_\alpha}\xi^{1+k_\alpha}}.
\end{split}
\end{align}

Case (ii):
\begin{align}\nonumber
\begin{split}
i\hat\alpha_\epsilon(\xi)&\ge \frac{1}{2^{k_\alpha}\alpha_{(1)}\epsilon^{2+k_\alpha}\xi^{1+k_\alpha}}\int_0^1\sin(\pi s)[K_{-,\alpha}s^{k_\alpha}-K_{+,\alpha}(s+1)^{k_\alpha}]\mathrm ds\\
&:=\frac{C'^{(ii)}_{\alpha}}{\epsilon^{2+k_\alpha}\xi^{1+k_\alpha}}.
\end{split}
\end{align}
\end{proof}

\begin{thm}\label{thm:temp}
Let $F$ satisfy Definition \ref{def:ftemp} and $\alpha_\epsilon$ Definitions \ref{def:der}, \ref{def:analytic}, \ref{def:alphapos}, and \ref{def:flat}.  For each $\epsilon>0$, the generalized function $1/\hat\alpha_\epsilon$ is a tempered distribution.  In addition, $\hat v_\epsilon=-\hat F/\hat\alpha_\epsilon$ is a tempered distribution.
\end{thm}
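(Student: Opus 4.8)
The plan is to show that $1/\hat\alpha_\epsilon$ is a tempered distribution by exhibiting it as a bounded functional on $\mathcal{S}$, splitting the real line into a near-field region around the origin and a far-field region, and using Lemmas~\ref{lem:est} and~\ref{lem:est:alt} to control the singularity and the growth respectively. Since $\hat\alpha_\epsilon$ is anti-symmetric with a simple zero at $\xi=0$ (by \eqref{kprop:int}, $\hat\alpha_\epsilon'(0)\neq 0$) and no other real zeros (Definition~\ref{def:alphapos}), the generalized function $1/\hat\alpha_\epsilon$ is unambiguously defined as a principal value near the origin. First I would fix $\epsilon>0$ and a cutoff $\rho=2\epsilon b_\alpha$, and write $1/\hat\alpha_\epsilon = g_0 + g_\infty$, where $g_0$ is supported in $|\xi|\le 2\rho$ and $g_\infty$ in $|\xi|\ge\rho$ (using a smooth partition of unity). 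On the near-field piece, I note that $i\hat\alpha_\epsilon(\xi)/\xi \to 2\pi$ as $\xi\to 0$, and more precisely \eqref{est} gives $2\pi|\xi| - C_\alpha\epsilon^2|\xi|^3 \le i\,\mathrm{sgn}(\xi)\hat\alpha_\epsilon(\xi) \le 2\pi|\xi|$; for $|\xi|\le 2\rho$ with $\rho$ small this forces $i\,\mathrm{sgn}(\xi)\hat\alpha_\epsilon(\xi) \ge \pi|\xi|$, so $1/\hat\alpha_\epsilon$ behaves like $c/\xi$ up to a bounded (indeed $C^\infty$, since $\hat\alpha_\epsilon$ is entire analytic and nonzero off the origin) correction. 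Hence $g_0$ is the sum of a multiple of the principal-value distribution $\mathrm{p.v.}\,1/\xi$ — a standard tempered distribution — and a compactly supported continuous function, so $g_0 \in \mathcal{S}'$.

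For the far-field piece $g_\infty$, the point is that $1/\hat\alpha_\epsilon$ grows at most polynomially. By Lemma~\ref{lem:est:alt}, for $|\xi|\ge\rho$ we have $|\hat\alpha_\epsilon(\xi)| = i\,\mathrm{sgn}(\xi)\hat\alpha_\epsilon(\xi) \ge C'_\alpha\epsilon^{-2-k_\alpha}|\xi|^{-1-k_\alpha}$, whence $|1/\hat\alpha_\epsilon(\xi)| \le C'^{-1}_\alpha\epsilon^{2+k_\alpha}|\xi|^{1+k_\alpha}$. Since $k_\alpha > -2$ the exponent $1+k_\alpha > -1$, so this is a locally integrable function of at most polynomial growth on $\{|\xi|\ge\rho\}$; any such function defines a tempered distribution via $\psi\mapsto\int g_\infty\psi$, the integral converging absolutely for $\psi\in\mathcal{S}$ because $\psi$ decays faster than any polynomial. (One also checks $g_\infty$ is continuous away from the cutoff since $\hat\alpha_\epsilon$ is entire and nonvanishing there; near the cutoff the smooth partition makes the join $C^\infty$.) Combining, $1/\hat\alpha_\epsilon = g_0 + g_\infty \in \mathcal{S}'$.

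For the second assertion, $\hat v_\epsilon = -\hat F/\hat\alpha_\epsilon$: by Definition~\ref{def:ftemp}, $F\in\mathcal{S}'$ so $\hat F\in\mathcal{S}'$. As noted in the remark preceding the theorem, $2\pi i/\hat\alpha_\epsilon - 1/\xi$ is $C^\infty$, and combined with the polynomial bound from Lemma~\ref{lem:est:alt} its derivatives have at most polynomial growth, so it is a multiplier of $\mathcal{S}$ (hence of $\mathcal{S}'$); thus the only delicate factor is the $1/\xi$ singularity at the origin. Writing $\hat F/\hat\alpha_\epsilon = \frac{1}{2\pi i}\hat F\cdot\frac{1}{\xi} + \hat F\cdot\bigl(\frac{1}{\hat\alpha_\epsilon}-\frac{1}{2\pi i\xi}\bigr)$, the second term is a tempered distribution times a smooth slowly-growing multiplier, hence in $\mathcal{S}'$; for the first term I invoke that division of a tempered distribution by $\xi$ is well-defined in $\mathcal{S}'$ — there exists $G\in\mathcal{S}'$ with $\xi G = \hat F$, unique up to addition of a multiple of $\delta_0$ (this is exactly the source of the $A_{0,0}$ constant in \eqref{sol:int}), and one fixes the choice, e.g. $G = \mathrm{p.v.}(1/\xi)\,\hat F$ in the localized sense. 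I expect the main obstacle to be this last point: making rigorous the distributional quotient $\hat F/\xi$ near the origin and verifying it is genuinely \emph{tempered} rather than merely a distribution — this requires the classical structure theorem that every tempered distribution is a finite-order derivative of a polynomially-bounded continuous function, applied locally near $\xi=0$, together with the observation that dividing such a representation by $\xi$ (after subtracting the Taylor jet at $0$ to kill the obstruction, absorbing the ambiguity into $A_{0,0}\delta_0$) again yields a finite-order derivative of a polynomially-bounded function. Everything else is bookkeeping with the two Lemmas and the multiplier property.
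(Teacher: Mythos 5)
Your proposal is correct and rests on the same two pillars as the paper's proof — isolating the $1/\xi$ singularity and invoking Lemmas~\ref{lem:est} and~\ref{lem:est:alt} for the near- and far-field estimates — but the packaging differs in one notable way. The paper does not use a partition of unity: it subtracts $1/(2\pi i\xi)$ \emph{globally}, defining $\hat\beta_\epsilon(\xi) := 2\pi/\bigl(i\hat\alpha_\epsilon(\xi)\bigr) - 1/\xi$, then uses the near-field estimate \eqref{est} to show $\hat\beta_\epsilon$ is continuous at the origin (hence locally bounded) and the far-field estimate \eqref{est:alt} to show it grows polynomially; this makes $\hat\beta_\epsilon$ a tempered function, so $1/\hat\alpha_\epsilon = -1/(2\pi i\xi) - \hat\beta_\epsilon/(2\pi i)$ is tempered. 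The advantage of the paper's global $\hat\beta_\epsilon$ is that the same object is reused in the next theorem (the weak-convergence estimate \eqref{est:beta} is an estimate on $\hat\beta_\epsilon$), whereas your cutoff functions would need to be re-introduced there. For the second assertion, the paper also writes $2\pi i\hat v_\epsilon = \hat F/\xi + \hat F\hat\beta_\epsilon$ and, rather than invoking the structure theorem, simply cites a reference (Strichartz, Problem~6.9) for the fact that the principal-value quotient of a tempered distribution by $\xi$ is the Fourier transform of a tempered primitive of $F$; you identify the same delicacy but resolve it by hand, which is more self-contained but also the longest and least airtight part of your write-up. One shared soft spot: both you and the paper assert that $\hat\beta_\epsilon$ (equivalently your $1/\hat\alpha_\epsilon - 1/(2\pi i\xi)$) multiplies Schwartz functions into Schwartz functions, but neither actually bounds its \emph{derivatives} polynomially — the far-field lemma controls only $\hat\alpha_\epsilon$ itself, and the multiplier property requires estimates on $\hat\alpha_\epsilon^{(n)}$ (available from \eqref{kprop:analytic}) combined with the lower bound on $\hat\alpha_\epsilon$, a step worth making explicit.
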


\begin{proof}
We let $1/\hat\alpha_\epsilon(\xi)=-1/2\pi i\xi-\hat\beta_\epsilon(\xi)/2\pi i$, where, for $|\xi|$ sufficiently small, $\hat\beta_\epsilon$ satisfies the following estimate (cf. \eqref{est}):
\begin{align}\label{est:beta:near}
|\hat\beta_\epsilon(\xi)|=\frac{|2\pi\xi-i\hat\alpha_\epsilon(\xi)|}{|\xi\hat\alpha_\epsilon(\xi)|}\le \frac{C_\alpha\epsilon^2|\xi|^2}{|\hat\alpha_\epsilon(\xi)|}\le\frac{C_\alpha\epsilon^2|\xi|}{(2\pi-C_\alpha\epsilon^2|\xi|^2)}.
\end{align}
This shows that $\hat\beta_\epsilon$ is continuous at $\xi=0$.  Since $\hat\alpha_\epsilon$ has no other zeros (cf. Definition \ref{def:alphapos}), we conclude that $\hat\beta_\epsilon$ is a locally bounded function on $\mathbb{R}$ (i.e. it has no singularities).  

We claim that $\hat\beta_\epsilon$ is a tempered distribution.  Since it has no singularities, we need only examine its asymptotic behavior at infinity.  Using Lemma \ref{lem:est:alt}, in terms of $\hat\beta_\epsilon(\xi)=-1/\xi+2\pi/i\hat\alpha_\epsilon(\xi)$, we obtain the following inequality for $|\xi|\ge 1/2\epsilon b_\alpha$:
\begin{align}\label{est:beta:far}
\hat\beta_\epsilon(|\xi|)|\le\frac{2\pi\epsilon^{2+k_\alpha}}{C'_\alpha}|\xi|^{1+k_\alpha}-\frac{1}{|\xi|}.
\end{align}
Since this holds for each $\epsilon>0$, we see that the function $\hat\beta_\epsilon$ is a tempered distribution.  Since $1/\xi$ (interpreted in the principal value sense) is a tempered distribution, this means that $1/\hat\alpha_\epsilon$ is also a tempered distribution.

To show that $\hat v_\epsilon$ is a tempered distribution, we write the product $\hat v_\epsilon$ as follows:
\begin{align*}
2\pi i\hat v_\epsilon=-2\pi i\hat F/\hat\alpha_\epsilon=\hat F/\xi+\hat F\hat\beta_\epsilon.
\end{align*}
If we interpret $1/\xi$ in the principal value sense, then the first term corresponds to the distributional integral of $F$ (see e.g. Problem 6.9 in \cite{strichartz}).  The second term is also a tempered distribution, since $(\hat F_s\hat\beta_\epsilon,\psi)=(\hat F,\hat\beta_\epsilon\psi)=:(\hat F,\psi_\beta)$, where, for an arbitrary test function $\psi$, $\psi_\beta$ is also a test function, since $\hat\beta_\epsilon$ is both tempered and $C^\infty$ (cf. Definition \ref{def:analytic}).  Since $\hat F$ is a tempered distribution, this term makes sense.  Thus, $\hat v_\epsilon$ is well-defined.
\end{proof}

Unlike in the classical case, the nonlocal antiderivative $v_\epsilon$ can be much rougher than its ``derivative" $F$.  This occurs if $\alpha_\epsilon$ is chosen to be too flat at zero.  For example, in the estimate \eqref{est:alt}, if $\alpha_\epsilon$ is Lipschitz continuous at zero (i.e. $k_\alpha=1$), then we see that $i\hat\alpha_\epsilon(|\xi|)\ge C_{\alpha,\epsilon}/\xi^2$ for $\xi$ sufficiently large.  This means that $\hat F(\xi)/\hat\alpha_\epsilon(\xi)\sim \hat F(\xi)\xi^2$ for $\xi$ sufficiently large, which shows that $v_\epsilon$, in this case, is as smooth as $F''$.  This gets worse the larger that $k_\alpha$ is, or the flatter that $\alpha_\epsilon$ is at zero.  

However, if $\alpha_\epsilon$ is chosen to be sufficiently singular at zero, then nonlocal integration can be a smoothing process, as in the classical case.  From \eqref{est:alt}, we see that if $k_\alpha=-1-\delta$ for $0<\delta<1$, then $i\hat\alpha_\epsilon(|\xi|)\ge C_{\alpha,\epsilon}|\xi|^{-\delta}$.  This means that $\hat v_\epsilon\sim \hat F/|\xi|^{\delta}$ for $|\xi|$ sufficiently large.  Therefore, $v_\epsilon$ is actually smoother than its ``derivative" $F$, for these kernels $\alpha_\epsilon$.

Now that we have made sense of the formal solution \eqref{vdef}, we can address its convergence to the classical solution $v(t)=\int F(t)\mathrm dt$.

\begin{thm}[Weak convergence]\label{thm:conv}
Let $F$ satisfy Definition \ref{def:ftemp} and $\alpha_\epsilon$ Definitions \ref{def:der}, \ref{def:analytic}, \ref{def:alphapos}, and \ref{def:flat}.  Then $v_\epsilon\rightharpoonup v$ in $\mathcal{S}'$ as $\epsilon\to 0$.  That is, $(v_\epsilon,\psi)\to (v,\psi)$ for every $\psi$ in $\mathcal{S}$.
\end{thm}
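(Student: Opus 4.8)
The plan is to show that $v_\epsilon - v \rightharpoonup 0$ in $\mathcal{S}'$ by decomposing $2\pi i \hat v_\epsilon = \hat F/\xi + \hat F\hat\beta_\epsilon$ as in the proof of Theorem \ref{thm:temp}, where $2\pi i \hat v = \hat F/\xi$ corresponds to the classical antiderivative. The difference is therefore controlled entirely by the term $\hat F\hat\beta_\epsilon$, so it suffices to prove that $\hat F\hat\beta_\epsilon \rightharpoonup 0$ in $\mathcal{S}'$, i.e. $(\hat F, \hat\beta_\epsilon\psi) \to 0$ for every $\psi \in \mathcal{S}$. Since $\hat F$ is a fixed tempered distribution, this reduces to showing that $\hat\beta_\epsilon\psi \to 0$ in the Schwartz topology of $\mathcal{S}$ — that is, $\sup_\xi |\xi^a (\hat\beta_\epsilon\psi)^{(b)}(\xi)| \to 0$ for all $a,b \ge 0$. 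Because $\psi$ decays rapidly, it is enough to control $\hat\beta_\epsilon$ and its derivatives on polynomial scales; by the Leibniz rule the seminorm estimate follows once one bounds $|\hat\beta_\epsilon^{(m)}(\xi)|$ by something that tends to $0$ uniformly on compacta and grows at most polynomially.

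First I would establish pointwise convergence $\hat\beta_\epsilon(\xi) \to 0$ for each fixed $\xi$. From \eqref{est:beta:near} we already have $|\hat\beta_\epsilon(\xi)| \le C_\alpha\epsilon^2|\xi|/(2\pi - C_\alpha\epsilon^2|\xi|^2)$, which for fixed $\xi$ is $O(\epsilon^2) \to 0$; moreover this bound is uniform for $\xi$ in any fixed compact set once $\epsilon$ is small. For the tail $|\xi| \ge 1/2\epsilon b_\alpha$, estimate \eqref{est:beta:far} gives $|\hat\beta_\epsilon(|\xi|)| \le 2\pi\epsilon^{2+k_\alpha}|\xi|^{1+k_\alpha}/C'_\alpha$; since $k_\alpha > -2$ we have $1+k_\alpha > -1$, and on the relevant range $\epsilon \sim 1/|\xi|$, so the bound is at most $O(|\xi|^{-1})$ there. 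Thus on any fixed compact set $K$, for $\epsilon$ small enough that $K \subset [-1/2\epsilon b_\alpha, 1/2\epsilon b_\alpha]$, the near-field bound applies and $\sup_K |\hat\beta_\epsilon| \to 0$; and globally $|\hat\beta_\epsilon(\xi)| \le C(1 + |\xi|^{\max(1,1+k_\alpha)})$ uniformly in $\epsilon$, giving the needed polynomial envelope. This handles $a$ arbitrary, $b=0$.

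The remaining work is the derivatives. The cleanest route is to avoid differentiating $\hat\beta_\epsilon$ directly and instead integrate by parts: $(\hat F, \hat\beta_\epsilon\psi)$ only requires $\hat\beta_\epsilon\psi$ to be Schwartz, and one may transfer derivatives onto $\hat F$ at the cost of working with a possibly higher-order tempered distribution — but since $\hat F \in \mathcal{S}'$, by the structure theorem $\hat F = \sum_{|\gamma|\le N} \partial^\gamma g_\gamma$ with $g_\gamma$ continuous of polynomial growth, so $(\hat F, \hat\beta_\epsilon\psi) = \sum_\gamma (-1)^{|\gamma|}\int g_\gamma \,\partial^\gamma(\hat\beta_\epsilon\psi)$, and by Leibniz this is a finite sum of integrals $\int g_\gamma \cdot \hat\beta_\epsilon^{(m)} \cdot \psi^{(\ell)}$ with $m \le N$. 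So I need uniform-on-compacta decay and polynomial global bounds for $\hat\beta_\epsilon^{(m)}$, $m \le N$. These follow by differentiating the defining Fourier-sine integral $-\hat\alpha_\epsilon(\xi)/2i = (\epsilon\alpha_{(1)})^{-1}\int_0^\infty \sin(2\pi\epsilon\xi s)\alpha(s)\,\mathrm ds$ under the integral sign (legitimate by \eqref{kprop:analytic}, which gives $\int |s^j\alpha(s)|\,\mathrm ds < \infty$ for all $j$), yielding $\hat\alpha_\epsilon^{(m)}(\xi) = O(\epsilon^{m}\cdot\epsilon)$-type behavior near $0$ and controlled growth elsewhere; then $\hat\beta_\epsilon^{(m)}$ is obtained from $1/\hat\alpha_\epsilon = -1/2\pi i\xi - \hat\beta_\epsilon/2\pi i$ via the quotient rule, using the lower bounds in Lemmas \ref{lem:est} and \ref{lem:est:alt} to control the denominators. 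Then dominated convergence (with dominating function built from the $\epsilon$-uniform polynomial bounds against $g_\gamma$ of polynomial growth times $\psi^{(\ell)}$ Schwartz) finishes the argument.

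I expect the main obstacle to be the bookkeeping for the derivative bounds $\hat\beta_\epsilon^{(m)}$: one must show these decay uniformly on compact sets and have an $\epsilon$-independent polynomial envelope globally, and the quotient rule applied to $1/\hat\alpha_\epsilon$ produces terms with powers of $1/\hat\alpha_\epsilon$ in the denominator, where the near-field lower bound $i\hat\alpha_\epsilon(|\xi|) \ge 2\pi|\xi| - C_\alpha\epsilon^2|\xi|^3$ degenerates at $\xi = 0$. This degeneration is exactly the source of the principal-value $1/\xi$ singularity, which has been split off into the $\hat F/\xi$ term; so the point is that $\hat\beta_\epsilon$ itself is the \emph{regular} remainder, and one must verify that the singular behaviors cancel at every derivative order — i.e. that $\hat\beta_\epsilon^{(m)}$ really is continuous at $0$ with the $\epsilon$-decay surviving differentiation. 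The cleanest way to see this is to note from \eqref{est} that $i\hat\alpha_\epsilon(\xi) = 2\pi\xi(1 - O(\epsilon^2\xi^2))$ with the error term real-analytic in $\xi$ near $0$ (it is a convergent power series by \eqref{kprop:analytic}), so $\hat\beta_\epsilon(\xi) = \xi \cdot h_\epsilon(\xi)$ with $h_\epsilon$ analytic near $0$ and $h_\epsilon = O(\epsilon^2)$ there together with all its derivatives; this makes the $m$-th derivative estimate routine once stated.
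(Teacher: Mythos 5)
Your proposal follows the same overall route as the paper: decompose $2\pi i\hat v_\epsilon=\hat F/\xi+\hat F\hat\beta_\epsilon$, apply the near- and far-field bounds on $\hat\beta_\epsilon$ from Lemmas \ref{lem:est} and \ref{lem:est:alt}, and pass the limit through the pairing with $\hat F$. The paper's proof stops at \eqref{est:beta} and asserts that $\psi_\epsilon:=\hat\beta_\epsilon\check\psi\to 0$ \emph{uniformly}, then concludes $(\hat F,\psi_\epsilon)\to 0$. You are right to regard this last step as incomplete as written: a general $\hat F\in\mathcal S'$ is continuous only in the Schwartz topology, and $L^\infty$ convergence of $\psi_\epsilon$ does not by itself control $(\hat F,\psi_\epsilon)$ (e.g.\ $\hat F=\delta'$ sees $\psi_\epsilon'(0)$). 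One needs $\hat\beta_\epsilon\check\psi\to 0$ in $\mathcal S$, i.e.\ convergence of all seminorms $\sup_\xi|\xi^a(\hat\beta_\epsilon\check\psi)^{(b)}(\xi)|$, which in turn requires bounds on derivatives of $\hat\beta_\epsilon$.

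Your route to closing this gap is sound in outline and is the natural completion of the paper's argument: use the $\mathcal S'$ structure theorem (or the finite-order continuity of $\hat F$) to reduce to finitely many derivatives, then bound $\hat\beta_\epsilon^{(m)}$ via the quotient rule on $2\pi/i\hat\alpha_\epsilon-1/\xi$, the analyticity supplied by \eqref{kprop:analytic}, and the lower bounds \eqref{est} and \eqref{est:alt}. Your concluding device --- writing $i\hat\alpha_\epsilon(\xi)=2\pi\xi\bigl(1-c_\epsilon(\xi)\bigr)$ with $c_\epsilon$ analytic, even, and $O(\epsilon^2\xi^2)$ near $0$, so that $\hat\beta_\epsilon=\xi\,h_\epsilon(\xi)$ with $h_\epsilon$ analytic and $O(\epsilon^2)$ together with its derivatives --- correctly resolves the apparent degeneracy of the quotient-rule denominators at $\xi=0$. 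One small correction: your parenthetical scaling for $\hat\alpha_\epsilon^{(m)}$ near $0$ is off; differentiating \eqref{alphahat0} under the integral gives $|\hat\alpha_\epsilon^{(m)}(\xi)|\lesssim\epsilon^{m-1}$, not $\epsilon^{m+1}$. This does not affect the argument, since the estimates you actually need are on $\hat\beta_\epsilon^{(m)}$, and those follow from the $\xi\,h_\epsilon$ factorization. Overall your proposal is correct and in fact more careful than the paper's own proof on the Schwartz-topology step; the remaining work is bookkeeping.
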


\begin{proof}
We decompose $2\pi/i\hat\alpha_\epsilon-1/\xi=\hat\beta_\epsilon$ into near- and far-field contributions:
\begin{align*}
\hat\beta_\epsilon(\xi)=\hat\beta_\epsilon(\xi)\mathds{1}_{(0,1/2\epsilon b_\alpha)}(|\xi|)+\hat\beta_\epsilon(\xi)\mathds{1}_{(1/2\epsilon b_\alpha,\infty)}(|\xi|),
\end{align*}
where $b_\alpha$ is as in Definition \ref{def:flat}.  Applying the near- and far-field estimates \eqref{est:beta:near} and \eqref{est:beta:far}, we obtain:
\begin{align}\label{est:beta}
\hat\beta_\epsilon(|\xi|)\le \frac{C_\alpha\epsilon}{2\pi b_\alpha-C_\alpha/4b_\alpha}\mathds{1}_{(0,1/2\epsilon b_\alpha)}(|\xi|)+\left(\frac{2\pi}{C'_\alpha}\epsilon^{2+k_a}|\xi|^{1+k_a}+2\epsilon b_\alpha\right)\mathds{1}_{(1/2\epsilon b_\alpha,\infty)}(|\xi|).
\end{align}
If $\hat\beta_\epsilon$ represents the nonlocal correction to $1/\xi$, then we see that it vanishes pointwise as $\epsilon\to 0$.

We write $2\pi i\hat v_\epsilon$ as $\hat F/\xi+\hat F\hat\beta_\epsilon$ and show that the second term vanishes weakly.  For any test function $\psi$, we let $\check{\psi}:=\mathcal{F}^{-1}\psi$.  We have $(\hat F\hat\beta_\epsilon,\check\psi)=(\hat F,\hat\beta_\epsilon\check\psi)=:(\hat F,\psi_{\epsilon})$, where the test function $\psi_\epsilon(\xi)\to 0$ uniformly as $\epsilon\to 0$.  This shows that $(\hat v_\epsilon,\check\psi)=(v_\epsilon,\psi)\to(v,\psi)$ as $\epsilon\to 0$.
\end{proof}

\begin{rem}
Obviously, we are interested in those kernels $\alpha$ for which the denominator of the first term in \eqref{est:beta} is positive.  Such kernels exist.  For example, if we assume that $\text{supp}\,\alpha$ is in $(-b_\alpha,b_\alpha)$, then, under the hypotheses of Definition \ref{def:flat}, we can guarantee positivity if $-1.402984...<k_\alpha<-0.509742...$.  More specifically, if $K_{-,\alpha}=K_{+,\alpha}$ as well (so that $\alpha$ is a power function), then this holds if $-2<k_{\alpha}<0$ or if $k_\alpha\ge(24-2\pi^2)/(\pi^2-6)=1.1011...$.
%It suffices to have $6b_\alpha^2\alpha_{(1)}>\pi^2\alpha_{(3)}$.  
\end{rem}

If we make $\alpha_\epsilon$ more singular and restrict $F$ to be a function in an $L^p$ space, then we can use the estimate \eqref{est:beta} to obtain stronger convergence results.
\begin{cor}[Strong convergence]
Suppose that $-2<k_\alpha<-1$, such that $\alpha$ is not integrable at zero.  Suppose also that $F$ is in $L^p$, where $1\le p\le\infty$.  Then $v_\epsilon-v$ is in $L^p$, and $v_\epsilon-v\to 0$ in $L^p$.
\end{cor}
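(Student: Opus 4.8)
The plan is to work entirely on the Fourier side, where $\widehat{v_\epsilon-v}=-\hat F\hat\beta_\epsilon$ with $\hat\beta_\epsilon=2\pi/i\hat\alpha_\epsilon-1/\xi$ the nonlocal correction already analyzed in Theorem \ref{thm:conv}. The key structural fact we exploit is that when $-2<k_\alpha<-1$, the kernel $\alpha$ is non-integrable at zero, so $|\hat\alpha_\epsilon(\xi)|$ decays only like $|\xi|^{-1-k_\alpha}$ with $-1-k_\alpha>0$; in particular, $1/\hat\alpha_\epsilon$ grows \emph{sublinearly} at infinity. Combined with the near-field bound \eqref{est}, this forces $\hat\beta_\epsilon$ to be a \emph{bounded} function on all of $\mathbb{R}$ (not merely a tempered distribution): near zero it is $O(\epsilon^2|\xi|)$ and hence small, and for $|\xi|\ge 1/2\epsilon b_\alpha$ the estimate \eqref{est:beta:far} gives $|\hat\beta_\epsilon(\xi)|\le C\epsilon^{2+k_\alpha}|\xi|^{1+k_\alpha}$, which on that range is bounded by $C\epsilon^{2+k_\alpha}(2\epsilon b_\alpha)^{1+k_\alpha}=C'\epsilon$ since $1+k_\alpha<0$ means the power of $|\xi|$ is decreasing. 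So $\|\hat\beta_\epsilon\|_{L^\infty(\mathbb{R})}\le C_\alpha\epsilon\to 0$.

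First I would make the multiplier bound precise: from \eqref{est:beta} one reads off directly that $\hat\beta_\epsilon\in L^\infty$ with $\|\hat\beta_\epsilon\|_\infty=O(\epsilon)$ (the first term is $O(\epsilon)$ by inspection; the second term, on the region $|\xi|>1/2\epsilon b_\alpha$, is maximized at the left endpoint because $1+k_\alpha<0$, giving again $O(\epsilon)$). Next, I would split into the two endpoint cases. For $p=2$ (and, if one wants, $1<p<\infty$ via the Mikhlin–Hörmander multiplier theorem after checking $\hat\beta_\epsilon$ is $C^\infty$ with symbol-type derivative bounds), the map $g\mapsto \mathcal F^{-1}[\hat\beta_\epsilon\,\hat g]$ is bounded on $L^p$ with operator norm controlled by the multiplier bounds on $\hat\beta_\epsilon$, so $\|v_\epsilon-v\|_{L^p}=\|\mathcal F^{-1}[\hat\beta_\epsilon\hat F]\|_{L^p}\le C\|\hat\beta_\epsilon\|_\infty\|F\|_{L^p}\to 0$; in particular $v_\epsilon-v\in L^p$. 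For $p=1$ and $p=\infty$, the multiplier theorems fail, so instead I would estimate $\|\mathcal F^{-1}[\hat\beta_\epsilon]\|_{L^1}$ directly — since $\hat\beta_\epsilon$ is $C^\infty$, bounded, and decaying like $|\xi|^{1+k_\alpha}$ with $1+k_\alpha<0$ at infinity (plus its derivatives decaying faster), $\beta_\epsilon:=\mathcal F^{-1}[\hat\beta_\epsilon]$ is an honest $L^1$ function with $\|\beta_\epsilon\|_{L^1}\le C\|\hat\beta_\epsilon\|_\infty^{1-\theta}(\text{tail terms})^\theta=O(\epsilon^{\,\text{something positive}})$, after which $v_\epsilon-v=\beta_\epsilon\ast F$ and Young's inequality give $\|v_\epsilon-v\|_{L^p}\le\|\beta_\epsilon\|_{L^1}\|F\|_{L^p}\to 0$ for all $1\le p\le\infty$ at once, subsuming the Hilbert-transform case.

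The main obstacle is the $p=1$ and $p=\infty$ endpoints: one cannot invoke a bounded-multiplier theorem, so the convergence must come from showing $\beta_\epsilon\in L^1(\mathbb{R})$ with $\|\beta_\epsilon\|_{L^1}\to 0$, and this requires quantitative control not just on $\|\hat\beta_\epsilon\|_\infty$ but on enough derivatives of $\hat\beta_\epsilon$ (uniformly in $\epsilon$, with the right $\epsilon$-powers) to get $L^1$ decay of its inverse transform. Concretely I would show $\hat\beta_\epsilon$ and, say, $\hat\beta_\epsilon''$ are both $O(\epsilon)$ in a suitable weighted sense — the near-zero contribution is controlled by Lemma \ref{lem:est} after differentiating the defining integral \eqref{alphahat0}, and the far-field contribution by refining the alternating-series argument of Lemma \ref{lem:est:alt} to its derivatives — and then use $\|\beta_\epsilon\|_{L^1}\le C(\|\hat\beta_\epsilon\|_\infty\|\hat\beta_\epsilon''\|_\infty)^{1/2}$ (a standard consequence of Cauchy–Schwarz after inserting $(1+\xi^2)^{1/2}$). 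I expect the exponent in $k_\alpha$ to interact delicately with the differentiation when $k_\alpha$ is close to $-2$, since then $\alpha$ is nearly as singular as a non-tempered function and the far-field power $|\xi|^{1+k_\alpha}$ is only barely integrable; restricting to $-2<k_\alpha<-1$ as in the hypothesis is exactly what keeps this under control, so the proof should go through for the full range claimed.
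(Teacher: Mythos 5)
Your starting point — the uniform bound $\|\hat\beta_\epsilon\|_{L^\infty}=O(\epsilon)$ read off from \eqref{est:beta} using $1+k_\alpha<0$ — matches the paper's, but from there you branch onto a genuinely different route. The paper argues tersely via Hausdorff--Young: $\hat F\in L^q$ with $1/q=1-1/p$, so $\hat F\hat\beta_\epsilon\to 0$ in $L^q$, then invert. You instead propose Mikhlin--H\"ormander for $1<p<\infty$ and Young's inequality $v_\epsilon-v=\beta_\epsilon\ast F$ with $\|\beta_\epsilon\|_{L^1}\to 0$ for the endpoints $p\in\{1,\infty\}$. Your instinct to be uneasy about the endpoints is well placed — the paper's Hausdorff--Young step is only two-sided at $p=2$, so a Young's-inequality reformulation would in principle be a cleaner way to cover $1\le p\le\infty$ at once.

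However, the specific step you lean on to obtain $\beta_\epsilon\in L^1$ does not hold. The inequality $\|\beta_\epsilon\|_{L^1}\le C\bigl(\|\hat\beta_\epsilon\|_\infty\|\hat\beta_\epsilon''\|_\infty\bigr)^{1/2}$ is false as stated: take $\hat\beta\equiv 1$, so $\hat\beta''=0$ yet $\beta=\delta_0\notin L^1$. The genuine Cauchy--Schwarz estimate is $\|\beta_\epsilon\|_{L^1}\le C\bigl(\|\hat\beta_\epsilon\|_{L^2}+\|\hat\beta_\epsilon'\|_{L^2}\bigr)$, which requires $\hat\beta_\epsilon\in L^2(\mathbb{R})$. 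Since the far-field estimate \eqref{est:beta:far} gives decay only of order $|\xi|^{1+k_\alpha}$, square-integrability forces $2(1+k_\alpha)<-1$, i.e.\ $k_\alpha<-3/2$. So even granting the derivative bounds you sketch, this route covers only $-2<k_\alpha<-3/2$ and leaves the subrange $-3/2\le k_\alpha<-1$ open; in that regime it is not evident that $\beta_\epsilon\in L^1$ at all, and your closing assertion that ``the proof should go through for the full range claimed'' is not substantiated. Closing the gap would require either a different mechanism for the $L^1$ bound (e.g.\ a dyadic decomposition of $\hat\beta_\epsilon$ and summation of Bernstein-type pieces), a $p$-restricted statement, or reverting to the $p=2$ Plancherel argument that the paper's own computation really establishes.
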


\begin{proof}
We have $|\xi|^{1+k_\alpha}\le (2b_\alpha\epsilon)^{|k_\alpha|-1}$ in \eqref{est:beta}, which shows that $\hat\beta_\epsilon(|\xi|)\to 0$ uniformly as $\epsilon\to 0$.  It also shows that $2\pi i(v_\epsilon-v)=\mathcal{F}^{-1}[\hat F\hat\beta_\epsilon]$ is in $L^p$.  Thus, $\hat F\hat\beta_\epsilon\to 0$ in $L^q$, where $1/q=1-1/p$.  This shows that $v_\epsilon-v\to 0$ in $L^p$.
\end{proof}

\section*{Acknowledgments}
The author thanks Prof. Petronela Radu for useful discussions.

\bibliographystyle{abbrv} % abbrv to abbreviate first names
\bibliography{NonlocalREU15}

\end{document}